\newcommand{\F}{\mathcal{F}}
\newcommand{\X}{\mathcal{X}}
\newcommand{\K}{\mathcal{K}}
\newcommand{\eq}[1]{\begin{equation}\label{#1}}
\newcommand{\en}{\end{equation}}
\def\up#1{^{({#1})}} %
\title{Anderson Acceleration with Truncated Gram-Schmidt}
\author{Ziyuan Tang\thanks{Department of Computer Science and Engineering, University of Minnesota, Minneapolis (\email{tang0389@umn.edu}, \email{saad@umn.edu}). The research of Tang and Saad is supported by the NSF award DMS 2208456.}
\and Tianshi Xu\thanks{Department of Mathematics, Emory University, Atlanta, GA 30322 (\email{tianshi.xu@emory.edu},
\email{yxi26@emory.edu}). The research of Xi is supported by NSF award  DMS 2208412.}
\and Huan He\thanks{Work done in Department of Computer Science, Emory University, Atlanta, GA 30322 (\email{hehuannb@gmail.com})}
\and Yousef Saad\footnotemark[1]
\and Yuanzhe Xi\footnotemark[2]}
\begin{document}
\maketitle

\begin{abstract}
Anderson Acceleration (AA) is a popular algorithm designed to  enhance 
the convergence of fixed-point iterations.
In this paper, we introduce a variant of AA based on a Truncated
Gram-Schmidt process (AATGS) which has a few advantages over the classical AA. 
In particular, an attractive
feature of AATGS is that its iterates obey a three-term recurrence in the situation
when it is applied to solving  symmetric linear problems and this can  lead to a
considerable reduction of memory and computational costs.
We analyze the convergence of AATGS in
both full-depth and limited-depth scenarios and establish its
equivalence to the classical AA in the linear case.
We also report on the effectiveness of AATGS  through a set of numerical
experiments, ranging from solving nonlinear partial differential
equations to tackling nonlinear optimization problems. In particular, the performance of the
method is compared with that of the classical AA algorithms.
\end{abstract}


\begin{keywords} 
  Anderson Acceleration, Gram-Schmidt process, short-term recurrence, Krylov subspace, nonlinear
  equations
\end{keywords}

\begin{AMS}
   	65F10, 68W25, 65B99, 65N22
\end{AMS}

\section{Introduction and Motivation}
\label{sec:intro}
This paper  considers  numerical schemes for solving the nonlinear system of equations
\begin{equation}
f(x) = 0, 
\label{eq:problem}
\end{equation}
where $f$ is a continuously differentiable mapping from $\mathbb{R}^n $ to $\mathbb{R}^n$.
Problem \eqref{eq:problem} can be reformulated as an equivalent fixed point problem
\begin{equation}
x = g(x),
\label{eq:fp}
\end{equation}
for a suitable mapping $g$ from $\mathbb{R}^n$ to $\mathbb{R}^n$. For
example, we can set $g(x) = x + \beta f(x)$ for some nonzero scalar
$\beta$.  When the {fixed point iteration}, i.e., the sequence
generated by $x_{j+1} = g (x_j)$, converges to the fixed point of
\eqref{eq:fp} then this limit is
a solution to the problem \eqref{eq:problem}. However, the
fixed-point iteration can be slow or it can diverge
and therefore acceleration methods are often invoked to improve or establish convergence.
Anderson Acceleration (AA) \cite{Anderson65}, which is equivalent to
the DIIS method - or Pulay Mixing \cite{pul80,Pulay-DIIS} in quantum
chemistry, is a popular acceleration technique that has been developed for this purpose. 
AA  has found extensive applications in scientific computing and, more recently, in  machine learning
\cite{doi:10.1137/20M132938X,gdaam,10.5555/3524938.3525552,10.1145/3197517.3201290,doi:10.1137/18M1206151,homer11,10.1007/s10915-021-01548-2,wei2021stochastic}.

If the j-th iterate is denoted by $x_j$ and if we set $f_j\equiv f(x_j)$, then AA
starts with an initial $x_0$ and defines $x_1=g(x_0)=x_0+\beta_0 f_0$,
where $\beta_0 >0$ is a parameter. Let ${m}_j = \min \{m,j\}$ and
$j_m = \max \{0,j-m\}\equiv j-m_j$  and assume that the most recent ${m}_j$ iterates are
saved at each step. At step $j$, we define the matrices of differences:
\begin{equation}
\label{eq:dfdx}
\mathcal{X}_j=[\Delta x_{j_m}\;\ldots\;\Delta x_{j-1}]\in \mathbb{R}^{n\times {m}_j},
\qquad
\mathcal{F}_j=[\Delta f_{j_m}\;\ldots\;\Delta f_{j-1}]\in \mathbb{R}^{n\times {m}_j},
\end{equation}
where  $\Delta x_i:=x_{i+1}-x_i$ and $\Delta f_i:=f_{i+1} - f_{i}$. 
Then AA defines the next iterate as follows: 
\begin{align} 
x_{j+1} &=  x_j+\beta_j f_j -(\X_j+\beta_j \F_j) \theta_{j}  \quad \mbox{where:}  \label{eq:AA}  \\
\theta_{j} &= \text{argmin}_{\theta \in \mathbb R^{{m}_j}}\| f_j - \F_j \theta \|_2 .  
 \label{eq:thetaj}
\end{align}
Note that $x_{j+1}$ can be expressed with the  help of intermediate vectors:
\begin{equation}
\label{eq:AA1} 
\bar x_j = x_j-\mathcal{X}_j  \theta_{j} ,\quad 
\bar f_j = f_j-\mathcal{F}_j \theta_{j} , \quad 
x_{j+1} =\bar x_j + \beta_j \bar f_j .
\end{equation}
AA is closely related to Broyden's multi-secant type methods. This
connection was initially revealed in \cite{eyert:acceleration96} and
further discussed in \cite{FangSaad07}. Essentially, AA acts as a
`block version' of Broyden's second update method where an
update of rank ${m}_j$ is applied at each step, instead of 
the  traditional rank $1$
update. Note that the AA scheme just discussed retains $m_j$ past iterates 
where $m$ is often called the \textit{window size}, or sometimes \textit{depth},
of the AA procedure in the literature. In  subsequent sections, we will refer to this scheme as
AA(m). Retaining and using all past iterates is equivalent to setting $m=\infty$ in the
procedure and so it will be often denoted by AA($\infty$).
This is often referred to as the \textit{full-depth} Anderson Acceleration, while when
$m < \infty$, AA(m) is known as a  \textit{limited-depth}, \textit{windowed} or \textit{truncated}
version of AA.

The study of the convergence of AA has been an active research area in
recent years. It was shown in \cite{homer11} that the full-depth
AA$(\infty)$ applied to $g(x)=Gx+b$ is ``essentially equivalent" to
the GMRES method \cite{Saad-Schultz-GMRES} applied to $(I-G)x=b$ when $I-G$ is nonsingular and
the linear residuals are strictly decreasing in the norm. Under these
  assumptions, the iterate $x_{j}$ returned by AA$(\infty)$ at step
  $j$ is equal to $Gx_{j-1}^{GMRES}+b$ where $x_{j-1}^{GMRES}$ is the
  iterate returned by GMRES(j-1) with the same initial guess $x_0$.
The first rigorous convergence analysis of AA(m) for contractive fixed
point mappings was conducted in \cite{Kelley15} where the authors
prove the q-linear convergence of the residuals for linear problems
and the local r-linear convergence for nonlinear problems when the
coefficients in the linear combination remain bounded. In addition,
they also prove the q-linear convergence of the residuals for AA(1)
separately. These convergence results show that the convergence rate
of AA(m) is not worse than that of the underlying fixed point
iteration.  The explicit improvement of AA(m) over the underlying
fixed point iteration at each step is studied in \cite{sara20} where
the authors show that AA(m) can improve the convergence rate to first
order by a factor $\tau_j\leq 1$ that is equal to the ratio of
$\Vert f_j-\mathcal{F}_j\theta_j\Vert_2$ to $\Vert
f_j\Vert_2$. They also point out that although AA(m) can
increase the radius of convergence, AA(m) typically fails to improve
the convergence in quadratically converging fixed point
iterations. The asymptotic convergence analysis of AA(m) is conducted
in \cite{hans22}, where the authors show that the r-linear convergence
factor strongly depends on the initial condition for the r-linearly
convergent AA(m) sequence and the coefficients $\theta_j$ do not
converge but oscillate as the sequence converges. The one-step
convergence analysis of inexact AA(m) with a potentially
non-contractive mapping is conducted in \cite{fei22}. The convergence
rate of AA(m) on superlinearly and sublinearly converging fixed point
iterations has recently been studied in \cite{leo23}.
Recent work has also addressed the numerical stability of AA(m). The article
\cite{de2021anderson}  showed some interesting theoretical results
for AA(1) for linear problems, and numerically studied the least-squares
problem in AA(m). The paper emphasized that the robustness of least-squares
solution techniques like those based on the 
QR factorization can ensure small backward errors and
accurate results without the need for regularization.  
A comprehensive analysis of backward stability for approximate least-squares
solves in AA for linear problems can also be found in \cite{lupo2022anderson},
where rigorous theoretical bounds are exploited   to minimize computational
costs.

While recent studies have concentrated on the convergence of AA
and on improving its convergence properties, relatively little attention has been
devoted to reducing its memory usage.  One of the goals of this paper is to address this issue.
The paper develops a variant of AA that can exploit the symmetry (or near symmetry) of
the Jacobian of the function $f$. In doing so, the iterates will obey short-term
update expressions akin to those of the
Conjugate Gradient or Conjugate Residual methods.  The end result is a
substantial reduction in memory and computational costs when solving large-scale
nonlinear equations or optimization problems.  Short-term recurrences often lead
to numerical instabilities, and thus the proposed algorithm may
encounter numerical issues in some situations. 
To circumvent this problem we introduce a restarting strategy that aims at
monitoring the growth of floating point errors.

The remaining sections are organized as follows.  AATGS is introduced
in Section \ref{sec:AATGS} which also presents a convergence analysis.
The restarting strategy is discussed in Section \ref{sec:restarting}
and numerical experiments are provided in Section
\ref{sec:exp}. Finally, a few concluding remarks are drawn in Section
\ref{sec:conclusion}. 
Table~\ref{tab:notation} provides a summary of the notation and symbols used throughout the paper.

\begin{table}[htbp]
    \centering
    \begin{tabular}{c|c||c|c}
    \toprule
    Symbol & Description & Symbol & Description\\
    \midrule
    $f_j$ & $f(x_j)$ & $\Delta x_i$ & $x_{i+1}-x_i$\\
    $\Delta f_i$ & $f_{i+1}-f_i$ & $m$ & window size\\
    ${m}_j$ & $\min \{m,j\}$ & $j_m$ & $\max \{0,j-m\}$ \\
    $\mathcal{X}_j$ & $[\Delta x_{j_m}\;\ldots\;\Delta x_{j-1}]$ & $\mathcal{F}_j$ & $[\Delta f_{j_m}\;\ldots\;\Delta f_{j-1}]$ \\
    \bottomrule
    \end{tabular}
    \caption{List of some notation and symbols used in this paper.}
    \label{tab:notation}
\end{table}

\section{Anderson Acceleration with Truncated Gram-Schmidt (AATGS)}
\label{sec:AATGS}
The variant of Anderson Acceleration to be introduced in this section relies on
building an orthonormal set of vectors which will be used in place of the set $\F_j$ in AA.
The idea of using an orthonormal basis in AA is not
completely new. For example, it is common to use the QR decomposition to
determine the minimizer $\theta_j$ in \eqref{eq:thetaj} by orthonormalizing the
columns of $\mathcal{F}_j$. This will lead to a process that is less prone to
numerical errors than an approach based on normal equations. However, in the
limited-depth case, this approach requires the successive QR
factorization of an evolving set of vectors in which the oldest vector is
removed at each step once the buffer that stores $\mathcal{F}_j$ is full - which
occurs when $j+1\geq m$. The proper way to implement this effectively
in order to obtain the QR factorization of each new set of vectors, is through
a simple QR-downdating scheme,  see, e.g., \cite{homer11}. 
In this paper, we adopt a different viewpoint, proceeding similarly to the truncated GCR algorithm \cite{Eis-Elm-Sch} to produce a `locally' orthonormal basis, i.e., a basis in which the last vector is orthogonal to the most recent $m_j-1$ vectors instead of all previous vectors. We will show that this variant has some advantages over classical AA.

\subsection{AATGS(m)}
The basic idea of AATGS($m$) is to exploit an evolving set of `locally
  orthonormal' vectors $\{ q_i \} $ to simplify and improve the solution of the
  least-squares problem \eqref{eq:thetaj}. At the $j$-th step we start with
  $m_j -1 $ such vectors $q_{j_m+1}, q_{j_m+2},\cdots, q_{j-1}$ (when $j=0$ this
  set is empty). We orthonormalize $\Delta f_{j-1}$ against these vectors to
  obtain the next member $q_j$ of the set. Now the set $\F_j$ in \eqref{eq:dfdx} is
  replaced by $Q_j = [q_{j_m+1}, \cdots, q_j]$ and so the least-squares
  problem \eqref{eq:thetaj} is trivial to solve: $\theta_j = Q_j^T f_j$ and 
  $\bar f_j = f_j - Q_j \theta_j$ replaces the $\bar f_j $ in \eqref{eq:AA1}. 
  However, the new intermediate solution $\bar x_{j}$  can no longer be written as
  $\bar x_j = x_j - \X_j \theta_j$ \emph{because the sets $\X_j, Q_j$ are no longer paired
    by a secant condition}. This can be remedied by replacing the set $\X_j$ by a set 
$U_j=[u_{j_m+1},u_{j_m+2},\ldots,u_j]$ which  is \emph{paired} with the $q_i$'s.
Here $u_j $ is initially set to $\Delta x_{j-1}$ and this is linearly combined with the previous
$u_i$'s in exactly the same way $\Delta f_{j-1}$ is combined to the previous $q_i$'s.
In this way  the sets  $Q_j$ and $U_j$ are paired by a secant relation, in the sense that
each $q_i$ is approximately $J u_i$ where $J$ is the Jacobian at $x_i$.
The relation $\bar f_j = f_j - Q_j \theta_j$ indicates that the correct $\bar x_j$ is 
$\bar x_j = x_j - U_j \theta_j$  and we can compute $x_{j+1} = \bar x_j + \beta_j \bar f_j$ as before.
The whole procedure  is sketched as Algorithm~\ref{alg:TGS}.

\begin{algorithm}[H]
  \centering
  \caption{AATGS(m)}\label{alg:TGS}
    \begin{algorithmic}[1]
  \State \textbf{Input}: Function $f(x)$, initial guess $x_0$, window size $m$ \\
  Set $f_0 \equiv f(x_0)$, \quad  $x_1=x_0+\beta_0f_0$, \quad 
  $f_1 \equiv f(x_1)$
\For{$j=1,2,\cdots,$ until convergence} 
\State $u := \Delta x = x_j-x_{j-1}$
\State $q := \Delta f = f_j - f_{j-1}$
\For{{$i=j_m+1,\ldots, j-1$}} 
\State $s_{ij} :=  (q, q_i) $
\State $u := u - s_{ij} u_i$
\State $q := q - s_{ij} q_i$
\EndFor
\State $s_{jj}=\Vert q \Vert_2$
\State $q_{j} := q /s_{jj}$, \quad  $u_{j} := u /s_{jj}$\
\State {Set ${Q}_j = [q_{j_m+1}, \ldots, q_{j}],\quad {U}_j = [u_{j_m+1}, \ldots, u_{j}]$ } 
\State Compute ${\theta}_j = {Q}^{\top}_jf_j$
\State $x_{j+1} = (x_{j}-{U}_j{\theta}_j)+\beta_{j} (f_j- {Q}_j{\theta}_j)$
\State $f_{j+1} = f(x_{j+1})$
\EndFor
\end{algorithmic}
\end{algorithm}

When $m=\infty$, Lines 6-10 in Algorithm~\ref{alg:TGS} perform a modified
  Gram-Schmidt process to orthonormalize $\Delta f_{j-1}$ against all previous
  $q_i$'s, resulting in the vector $q_j$.  When $m<\infty$, the same lines of
  pseudocode perform an incomplete orthonormalization via a \emph{truncated
    Gram-Schmidt} procedure in which $\Delta f_{j-1}$ is orthonormalized against the previous
  $m_j-1$ vectors $q_i$'s, resulting again in the vector $q_j$.
  In the loop the exact same linear transformation is applied to get $u_j$ from
  $\Delta x_{j-1}$ and the previous $u_i$'s.
  We prefer   the modified Gram-Schmidt (MGS) to the numerically unreliable
  Classical Gram-Schmidt (CGS), but we stress that
  CGS with reorthogonalization can also be useful in a
   parallel computing environment    \cite{Swirydowicz_Langou_Ananthan_Yang_Thomas_2020}.
   Throughout the entire AATGS iterations, the bases $Q_j$
  and $U_j$ will always contain at most $m$ vectors.  Figure \ref{fig:truncatedGS}
  shows an illustration of how the truncated Gram-Schmidt process operates for
  AATGS(3).
\begin{figure}[htb]
    \centering
    \includegraphics[width=0.3\linewidth]{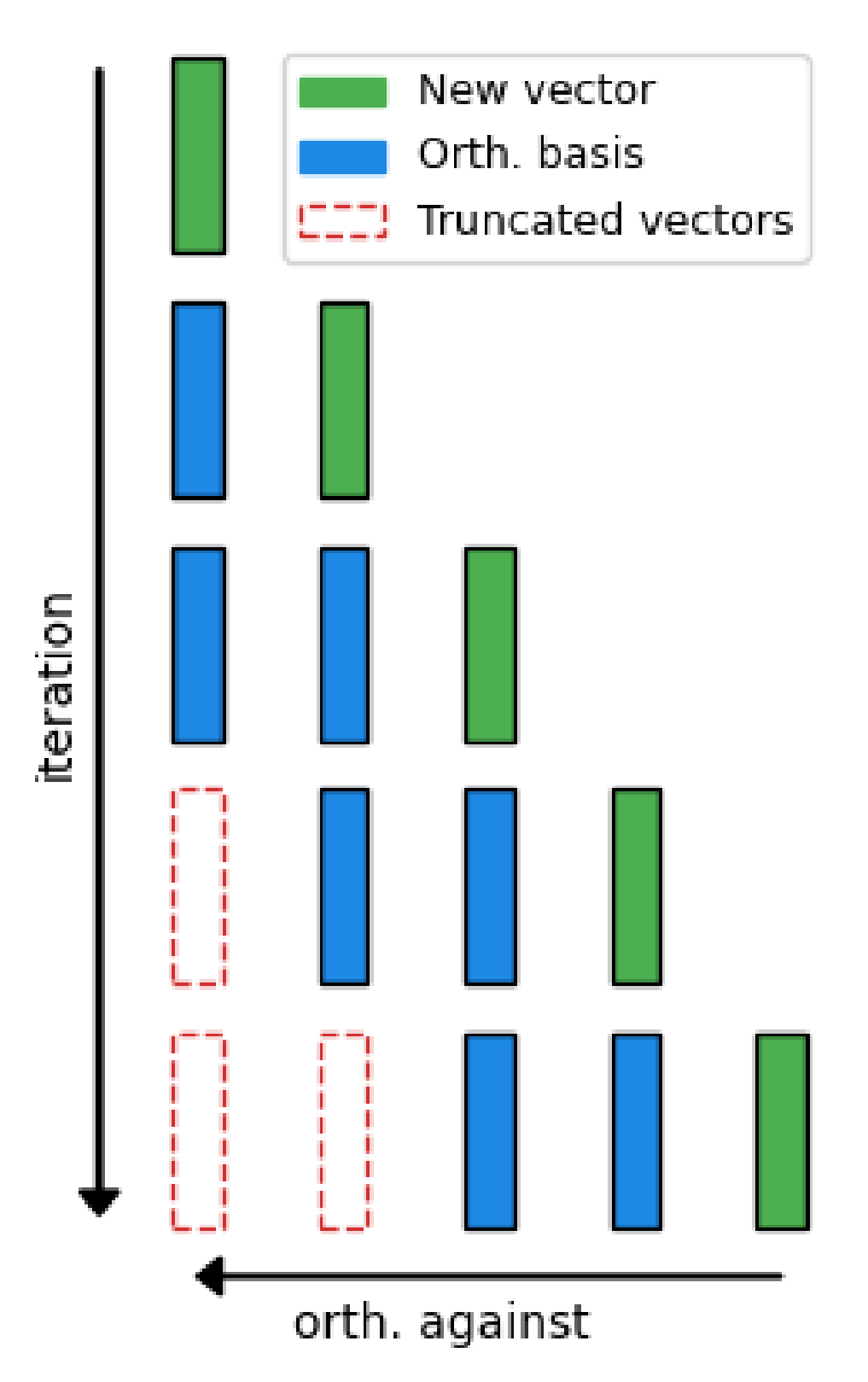}
    \caption{An illustration of the truncated Gram-Schmidt process to build the
      $q_i$'s in Lines 6-10 in Alg 2.1. In this figure,  the window size is $m=3$.
      The same  picture illustrates the process for the $u_i$'s: the new vector  $\Delta x_{j-1}$ is 
      linearly combined with (instead of orthonormalized against)  at most 2 previous $u_i$'s,  using the same scalars $s_{ij}$ as for the $q_i$'s.
    \label{fig:truncatedGS}}
\end{figure}

Define the
 $m_j \times m_j$ upper triangular matrix $S_j=\{s_{ik}\}_{i={j_m+1}:j,k={j_m+1}:j}$
resulting from the orthogonalization process, where the nonzero entries $s_{ij}$
are defined in Lines 7 and 11 of the algorithm.
In the full window case ($m = \infty$) or when $j \le m $ the block in Lines 4--12 essentially performs
a Gram-Schmidt QR factorization of the matrix $\F_j$, and enacts identical
operations on the set
$\X_j = [\Delta x_{j_m}, \Delta x_{j_m+1}, \cdots, \Delta x_{j-1} ]$.  A result
of the algorithm in this particular case is that we have
\eq{eq:QRrel} \F_j = Q_j S_j ; \qquad \X_j = U_j S_j,
\en 
but the above relation no longer holds when $j > m$.

Let us examine what happens  when $j=m+1$, focussing on  the set $Q_j$.  Before the orthogonalization begins we have
$\F_m = Q_m S_m$. To simplify notation we set $v_i \equiv \Delta f_{i-1}$ so that $\F_m=[v_1, v_2, \cdots, v_m] = Q_m S_m$.
Next the vector $q_{m+1}$ is computed and
  \emph{before truncation is applied} we actually have the factorization:
  $[v_1, v_2, \cdots, v_{m+1}] = [Q_m, q_{m+1} ]  S_{m+1}$ where $S_{m+1}$ is an
  $(m+1) \times (m+1) $ upper triangular matrix and $q_{m+1}$ is orthogonal to $q_2, \cdots, q_m$. This factorization is now truncated.  Ignoring the first column from the equality in this
relation leads to (recall that $Q_{m+1} \equiv [q_2, q_3, \cdots, q_{m+1}]$):
\[
  [v_2, \cdots, v_{m+1}] = [q_1, q_2, \cdots, q_{m+1} ]
  \begin{bmatrix}s_1^T \\ \hat S_{m+1} 
  \end{bmatrix} = q_1 s_1^T + Q_{m+1} \hat S_{m+1} , 
\]
where,  using matlab notation: $s_1^T \equiv S_{m+1} (1,2:m+1)$, an $1 \times m$ vector,  and
$\hat S_{m+1} \equiv S_{m+1} (2:m+1,2:m+1)$, an $m \times m$ matrix.  
The end result is that the pair of matrices $Q_{m+1}$ and $\hat S_{m+1}$ constitute
the QR factorization of $[v_2, \cdots, v_{m+1}] - q_1 s_1^T$. Note that $q_1$ is a mutiple of $v_1$.
In the next step the set is modified by a rank-one matrix of the form $q_2 s^T$ and the process
continues in the same way adding one more rank-one perturbation at each step.

After step $j$ of Algorithm~\ref{alg:TGS} is applied, we would have built
the orthonormal basis $Q_j = [q_{j_m+1}, \cdots, q_j] $ along with a paired
system $U_j = [u_{j_m+1}, \cdots, u_j]$.  Note that $Q_j$ has orthonormal
columns but not $U_j$.
The vector $\theta_j$ computed in Line~14 by a simple matrix-vector product,
is the least-squares solution of $ \min_\theta \| f_j - Q_j \theta \|_2$.
The resulting residual $f_j - Q_j \theta_j$ is not 
not necessarily the same as the  $\bar f_j = f_j - \X_j \theta_j$ of AA 
since the span of $\F_j$ differs from the span
of $Q_j$ when $j > m$. 
 Line 15 computes the next iterate $x_{j+1}$ using
the two paired bases $Q_j$ and $U_j$ and ${\theta}_j$.
Note that as for classical Anderson, we can also define
$\bar{x}_j$ and $\bar{f}_j$ and rewrite $x_{j+1}$ in the following form: 
\eq{eq:AA2N} 
\bar x_j = x_j-U_j {\theta}_{j} ,\quad 
\bar f_j = f_j-Q_j  {\theta}_{j} , \quad 
x_{j+1} =\bar x_j + \beta_j \bar f_j .
\en

We mentioned the case $j>m$ in the above discussion. It is easy to see that 
in the case where $j\le m$, the subspaces
spanned by $\F_j$ and $Q_j$ are identical and in this situation the iterates
$x_{j+1}$
resulting from AA and AATGS will be the same. In  particular,
when $m = \infty$ this will always be the case, i.e., 
the full-depth AATGS($\infty$) and AA$(\infty)$ return the same iterate $x_{j+1}$ in exact arithmetic at each iteration and thus are mathematically equivalent.

In the next section, we will study the properties of AATGS($\infty$) and exhibit a
particularly interesting short-term recurrence of the algorithm when it is applied to
symmetric linear systems.
\subsection{Theoretical analysis of AATGS($\infty$)}
Consider a linear problem where $f(x)=b-Ax$ and $A$ is invertible. Note that in
this case we have \eq{eq:APj} \F_j = -A \X_j.  \en In the next lemma, we show
that the matrix $U_j$ returned by Algorithm \ref{alg:TGS} forms a basis of the Krylov
subspace $\mathcal{K}_j(A,f_0)$ and that under mild conditions,
$Q_j, U_j$ satisfy the same relation as $\F_j, \X_j$ in \eqref{eq:APj}  for AATGS($\infty)$.

\begin{lemma}
  Assume $A$ is invertible and $f(x)=b-Ax$. If Algorithm \ref{alg:TGS}
 applied for solving $f(x) = 0 $  with
  $m=\infty$ does not break at step $j$, then the system
  $U_j$ forms a basis of the Krylov subspace $\mathcal{K}_j(A,f_0)$. In
  addition, the orthonormal system $Q_j$ built by Algorithm \ref{alg:TGS}
  satisfies $Q_j= -AU_j$.
\label{lemma:QAU}
\end{lemma}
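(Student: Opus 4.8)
The plan is to prove both assertions by induction on $j$, with the relation $Q_j = -AU_j$ serving as the engine that drives everything. The fundamental observation is that in the linear case $f(x) = b - Ax$ gives $\Delta f_i = f_{i+1} - f_i = -A(x_{i+1} - x_i) = -A\,\Delta x_i$, so the two vectors $q := \Delta f_{j-1}$ and $u := \Delta x_{j-1}$ that initialize step $j$ of Algorithm~\ref{alg:TGS} already satisfy $q = -Au$. This is just relation \eqref{eq:APj} read one column at a time.

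First I would establish $Q_j = -AU_j$. The base case $j=1$ is immediate: $u = \Delta x_0 = \beta_0 f_0$ and $q = \Delta f_0 = -A\,\Delta x_0 = -Au$, a relation that survives the normalization by $s_{11}$. For the inductive step, assume $q_i = -Au_i$ for all $i < j$. Since Lines 6--10 apply the identical scalar $s_{ij}$ to update both $u$ and $q$, each inner iteration sends $(u,q) \mapsto (u - s_{ij}u_i,\; q - s_{ij}q_i)$; using the current invariant $q = -Au$ together with $q_i = -Au_i$, the new $q$ equals $-A$ times the new $u$. Hence the relation $q = -Au$ is preserved throughout the loop and, after the scaling in Line~11, yields $q_j = -Au_j$. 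This gives $Q_j = -AU_j$, requiring only the no-breakdown hypothesis so that $s_{jj}=\|q\|_2\neq 0$.

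For the Krylov claim I would exploit the factorization $\X_j = U_j S_j$ of \eqref{eq:QRrel}, valid here because $m = \infty$. Since no breakdown at step $j$ means $s_{ii} \ne 0$ for every $i \le j$, the upper triangular $S_j$ is invertible, so $\mathrm{span}(U_j) = \mathrm{span}(\X_j)$ and it suffices to show $\mathrm{span}(\X_j) = \K_j(A,f_0)$. I would prove this by a second induction, the base case again following from $\Delta x_0 = \beta_0 f_0$. For the inductive step I would substitute the update \eqref{eq:AA2N}, writing $\Delta x_{j-1} = -U_{j-1}\theta_{j-1} + \beta_{j-1}(f_{j-1} - Q_{j-1}\theta_{j-1})$, and examine the three terms: $U_{j-1}\theta_{j-1} \in \mathrm{span}(U_{j-1}) = \K_{j-1}$; $Q_{j-1}\theta_{j-1} \in \mathrm{span}(Q_{j-1}) = A\,\K_{j-1} \subseteq \K_j$ by the already-proven relation $Q_{j-1} = -AU_{j-1}$; and $f_{j-1} = f_0 - A(x_{j-1} - x_0)$, where $x_{j-1} - x_0 = \sum_{i=0}^{j-2}\Delta x_i \in \K_{j-1}$ forces $f_{j-1} \in \K_j$. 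Thus $\Delta x_{j-1} \in \K_j$ and $\mathrm{span}(\X_j) \subseteq \K_j$. Equality then follows from a dimension count: the invertibility of $S_j$, together with $\F_j = -A\X_j$ and the invertibility of $A$, shows the $j$ columns of $\X_j$ are linearly independent, so $\dim\,\mathrm{span}(\X_j) = j = \dim\K_j$.

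The main obstacle is the Krylov equality rather than the $Q = -AU$ relation: one must confirm that each new $\Delta x_{j-1}$ both stays inside $\K_j$ and genuinely enlarges the subspace. The containment is delicate because the update formula mixes vectors living in $\K_{j-1}$ with the residual $f_{j-1}$; the key is that $f_{j-1}$ is the only term capable of reaching the new direction $A^{j-1}f_0$, and tracking it through $f_{j-1} = f_0 - A(x_{j-1}-x_0)$ is what closes the induction. The enlargement is then secured cleanly by the no-breakdown hypothesis through the dimension count.
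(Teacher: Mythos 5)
Your proposal is correct and follows essentially the same path as the paper's proof: a first induction establishing $q_j=-Au_j$ from the fact that identical coefficients $s_{ij}$ are applied to both sequences, followed by a second induction that places $\Delta x_{j-1}$ in $\K_j(A,f_0)$ by telescoping the residual ($f_{j-1}=f_0-A(x_{j-1}-x_0)$, versus the paper's expansion of $b-Ax_{j-1}$), with linear independence coming from the orthonormality of $Q_j$ and the invertibility of $A$. Your detour through $\X_j=U_jS_j$ with $S_j$ invertible, and the closing dimension count, are only cosmetic reorganizations of the paper's argument that $U_j=-A^{-1}Q_j$ has full column rank with all columns in $\K_j$.
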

\begin{proof}
  We first prove $Q_j=-AU_j$ by induction. When $j=1$, we have
  $q_1=(f_1-f_0)/s_{11}=-Au_1$. Assume $Q_{j-1}=-AU_{j-1}$. Then we have
    \begin{align*}
        s_{jj}q_j &= (f_j-f_{j-1})-\sum_{i=1}^{j-1}s_{ij}q_{i} =-A(x_j-x_{j-1})-\sum_{i=1}^{j-1}s_{ij}(-Au_{i})\\
        & = -A[(x_j-x_{j-1})-\sum_{i=1}^{j-1}s_{ij}u_{i}]\\ 
        & = s_{jj}(-Au_j).
    \end{align*}
    Thus, since $s_{jj} \ne 0$ we get $q_j=-Au_j$ and therefore $Q_j=-AU_j$, completing the
    induction proof.
    
    Next, we prove by induction  that $U_j$ forms a basis of $\mathcal{K}_{j}(A,f_0)$.
    It is more convenient to prove by induction the property that 
    for each $i\le j$, $U_i$ forms a basis of $\mathcal{K}_{i}(A,f_0)$.
    The result is true for $j=1$ since we have
    $u_1 = (x_1 - x_0)/s_{11} = \beta_0 f_0 /s_{11}$.  
    Now let us assume the property is true for $j-1$, i.e., that for each $i=1,2,\cdots,j-1$,
    $U_{i}$ is a basis of the Krylov subspace $\mathcal{K}_{i}(A,f_0)$. Then we have
\begin{align}
  s_{jj}u_j
  &= (x_j-x_{j-1})-\sum_{i=1}^{j-1}s_{ij}u_i \label{eq:sjjuj} \\                  
  &= -U_{j-1}{\theta}_{j-1} +\beta_{j-1}(f_{j-1}-Q_{j-1}{\theta}_{j-1})-\sum_{i=1}^{j-1}s_{ij}u_i \nonumber\\
  & = -U_{j-1}{\theta}_{j-1} + \beta_{j-1}f_{j-1} -\beta_{j-1}Q_{j-1}{\theta}_{j-1}- \sum_{i=1}^{j-1}s_{ij}u_i\nonumber \\
  & =\beta_{j-1}f_{j-1} -U_{j-1}{\theta}_{j-1}+\beta_{j-1}AU_{j-1}{\theta}_{j-1}-\sum_{i=1}^{j-1}s_{ij}u_i. \nonumber
\end{align}
  The induction hypothesis  shows that 
  $-U_{j-1}{\theta}_{j-1}+\beta_{j-1}AU_{j-1}\theta_{j-1}-\sum_{i=1}^{j-1}s_{ij}u_i\in\mathcal{K}_{j}(A,f_0)$.
  It remains to show that  $f_{j-1} = b-Ax_{j-1}\in \mathcal{K}_{j}(A,f_0)$. For this, we expand 
  $b-Ax_{j-1}$ as
    \[
        b-Ax_{j-1} = b- Ax_{j-1}+Ax_{j-2}-Ax_{j-2}+\ldots-Ax_{1}+Ax_{0}-Ax_{0} = \sum_{i=1}^{j-1}-A(x_{i}-x_{i-1})+f_0.
    \]
    From the relation \eqref{eq:sjjuj} applied with $j$ replaced by $i$, we see that $x_i - x_{i-1}$ is a linear
    combination of $u_1, u_2, \cdots, u_i$, i.e., it is a member $\K_i$ by the induction hypothesis.
Therefore $ -A (x_i - x_{i-1}) \in \K_{i+1}$ - but since $i \le j-1$ then
$ - A (x_i - x_{i-1}) \in \K_{j}$.  The remaining term $f_0$ is clearly in $\K_j$. Because $U_{j}=-A^{-1}Q_j$ has full column rank and $u_i\in\mathcal{K}_{j}(A,f_0)$ for $i=1,\ldots,j$, $U_j$ forms a basis of $\mathcal{K}_{j}(A,f_0)$. This completes the induction proof. \end{proof}

From \eqref{eq:AA2N}, we see that in the linear case under consideration
the vector $\bar f_j$ is the residual for $\bar x_j$:
\eq{eq:fbarj}
\bar f_j = f_j- Q_j {\theta}_{j}= (b-A x_j) - Q_j {\theta}_{j} = (b-A x_j) + A U_j {\theta}_{j} =
b - A (x_j - U_j {\theta}_{j})
= b - A \bar x_j .
\en   
The next theorem shows that $\bar x_j$ minimizes $\| b - A x\|_2$ over the affine space $x_0+\mathcal{K}_j{(A,f_0)}$.

\begin{theorem}
  \label{lem:opt1}
  The vector  $\bar x_{j}$ generated at the $j$-th step
  of AATGS($\infty$) minimizes the residual norm $\| b - A x\|_2$
  over all vectors $x$ in the affine space $x_0 + \mathcal{K}_j(A,f_0)$.
  It also minimizes the same residual norm over the subspace $x_k + \mathcal{K}_j(A,f_0)$ for any $k$ such that $0 \le k \le j$.
  \end{theorem}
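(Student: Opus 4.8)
The plan is to reduce the claim to the standard optimality characterization for least squares over an affine subspace: a vector $\bar x_j$ in an affine set $x_0 + V$ minimizes $\|b-Ax\|_2$ if and only if the residual $b-A\bar x_j$ is orthogonal to $AV$. Here the subspace is $V = \K_j(A,f_0)$, and the whole argument rests on the relation $Q_j = -AU_j$ together with the fact that $U_j$ is a basis of $\K_j(A,f_0)$, both furnished by Lemma~\ref{lemma:QAU}.

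First I would verify that $\bar x_j$ actually lies in the affine space $x_0 + \K_j(A,f_0)$. Writing $\bar x_j = x_j - U_j\theta_j$ as in \eqref{eq:AA2N}, the term $U_j\theta_j$ is trivially in $\K_j(A,f_0)$; for $x_j - x_0$ I would telescope $x_j - x_0 = \sum_{i=0}^{j-1}(x_{i+1}-x_i)$ and invoke the observation made in the proof of Lemma~\ref{lemma:QAU} (just after \eqref{eq:sjjuj}) that each $x_{i+1}-x_i$ is a linear combination of $u_1,\ldots,u_{i+1}$, hence lies in $\K_{i+1}(A,f_0)\subseteq\K_j(A,f_0)$. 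Thus $\bar x_j \in x_0 + \K_j(A,f_0)$.

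Next I would identify the space against which orthogonality must be checked. Since $U_j$ is a basis of $\K_j(A,f_0)$, we have $A\,\K_j(A,f_0) = \operatorname{span}(AU_j) = \operatorname{span}(Q_j)$, using $Q_j = -AU_j$ from Lemma~\ref{lemma:QAU}. Hence the optimality condition reduces to $Q_j^\top \bar f_j = 0$, where $\bar f_j = b - A\bar x_j$ is the residual, which equals $f_j - Q_j\theta_j$ by \eqref{eq:fbarj}. The verification is then immediate from the definition $\theta_j = Q_j^\top f_j$ computed in the algorithm and the orthonormality $Q_j^\top Q_j = I$: indeed $Q_j^\top \bar f_j = Q_j^\top f_j - Q_j^\top Q_j \theta_j = \theta_j - \theta_j = 0$. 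This establishes the minimization over $x_0 + \K_j(A,f_0)$.

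Finally, for the second statement I would show that the affine space does not depend on the base point $x_k$ for $0 \le k \le j$. By the same telescoping argument, $x_k - x_0 = \sum_{i=0}^{k-1}(x_{i+1}-x_i) \in \K_k(A,f_0) \subseteq \K_j(A,f_0)$, so $x_k + \K_j(A,f_0) = x_0 + \K_j(A,f_0)$ as affine sets, and the conclusion transfers verbatim. I expect the only delicate point to be the bookkeeping in the first step, namely confirming that $\bar x_j$ and all intermediate iterates reside in the correct Krylov subspace; the orthogonality computation itself is a one-line consequence of the orthonormality of $Q_j$, and everything else follows from Lemma~\ref{lemma:QAU}.
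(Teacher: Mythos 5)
Your proof is correct and takes essentially the same route as the paper's: both rest entirely on Lemma~\ref{lemma:QAU} (the relations $Q_j=-AU_j$ and $U_j$ a basis of $\mathcal{K}_j(A,f_0)$), the orthonormality of $Q_j$, and a telescoping of base points through the $\Delta x_i$'s --- your columnwise membership argument is just the paper's $\mathcal{X}_j e = U_j S_j e$ step in disguise. The only difference is cosmetic: you certify optimality via the residual-orthogonality criterion $Q_j^{\top}\bar f_j = 0$ over $x_0+\mathcal{K}_j(A,f_0)$, whereas the paper parametrizes $x = x_j - U_j y$, writes the residual as $f_j - Q_j y$, and minimizes explicitly before identifying the affine spaces $x_k + \mathcal{K}_j(A,f_0)$ for $0\le k\le j$.
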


  \begin{proof}
    Consider a  vector of the form $x = x_j - \delta $ where $\delta = U_j y$
    is an arbitrary member of $\mathcal{K}_j(A,f_0)$. We have
    \eq{eq:prf1}
      b - A x  = b - A (x_j - U_j y) = f_j + A U_j y = f_j - Q_j y .
      \en
      The minimal norm $\|b-Ax\|$ is reached when $y = Q_j^{\top} f_j $
      and the corresponding optimal $x$ is $\bar x_j$. Therefore, $\bar x_j$
      is the vector $x $ of the affine space $x_j + \mathcal{K}_j(A,f_0)$  with the smallest
      residual norm. We now write $x$ as:
\begin{align} 
x &= x_j - U_j y \nonumber   \\
  &= x_0 + (x_1-x_0) + (x_2-x_1) +
    (x_3-x_2) + \cdots (x_{i+1} - x_i) +\cdots (x_j - x_{j-1}) - U_j y \label{eq:lem2.2prf1}\\
  &= x_0 + \Delta x_0 + \Delta x_1 + \cdots +\Delta x_{j-1} - U_j y.  \label{eq:lem2.2prf2}
\end{align} 
 We will exploit the relation obtained from the QR factorization of Algorithm \ref{alg:TGS}, namely $\X_j = U_j S_j $ in \eqref{eq:QRrel}:
If $e$ is the vector of all ones, then
$\Delta x_0 + \Delta x_1 + \cdots + \Delta x_{j-1}  = \X_j e = U_j S_j e$.
Define $t_j \equiv S_j e$. Then, from \eqref{eq:lem2.2prf2} we obtain 
\eq{eq:xepr00}
x = x_j - \delta  = x_0 - U_j [y - t_j] .
\en 
This means that the set of all vectors of the form $ x_j - \delta $ is the same as
the set of all vectors of the form $x_0 - \delta'$ where $\delta' \ \in \ \mathcal{K}_j(A,f_0)$.
As a result, $\bar x_j$ also minimizes $b - A x$ over all vectors in the affine
space
$x_0 +  \ \mathcal{K}_j(A,f_0)$.
The proof can be easily repeated for any $k$ between $0$ and $j$. The expansion (\ref{eq:lem2.2prf1} --\ref{eq:lem2.2prf2}) 
becomes 
\begin{align} 
 x_j - U_j y &= x_k + (x_{k+1}-x_k) + (x_{k+2}-x_{k+1}) + \cdots (x_{i+1} - x_i) +\cdots (x_j - x_{j-1}) - U_j y \label{eq:lem2.2prf3}\\
  &= x_k + \Delta x_k + \Delta x_{k+1} + \cdots + \Delta x_{j-1} - U_j y. 
\label{eq:lem2.2prf4}\
\end{align} 
The rest of the proof is similar and straightforward.
\end{proof}

Theorem \ref{lem:opt1} shows that $\bar x_j$ is the $j$-th iterate of the GMRES algorithm for solving
$Ax=b$ with the initial guess $x_0$ and that $\bar f_j $ is the corresponding residual. The value of $\bar x_j$ is independent of the choice of $\beta_i$ for $i\leq j$. Now consider the residual $f_{j+1}$ of AATGS($\infty$) at step $j+1$. From the relations
$x_{j+1} = \bar x_j + \beta_j \bar f_j$ and \eqref{eq:fbarj} we get:
\eq{eq:fjp1}
f_{j+1} = b - A[\bar x_j + \beta_j \bar f_j]  =b -  A \bar x_j - \beta_j A \bar f_j
 = \bar f_j - \beta_{j} A \bar f_j = (I-\beta_j A) \bar f_j.
\en
This implies that the vector $f_{j+1}$ is the residual for $x_{j+1}$ obtained from $x_{j+1} = \bar x_j + \beta_j \bar f_j$ - which is a simple Richardson iteration
starting from the iterate $\bar x_j$. Therefore,  $x_{j+1}$ 
in Line~15 of Algorithm \ref{alg:TGS}  
is nothing but a Richardson iteration step
from this GMRES iterate. This is stated in the following proposition.

\begin{proposition}
  \label{prop:opt1}
  Assume $A$ is invertible and $f(x)=b-Ax$. If Algorithm \ref{alg:TGS}
 applied for solving $f(x) = 0 $  with
  $m=\infty$ does not break at step $j+1$, then
  the residual $f_{j+1}$  of the iterate   $x_{j+1}$ generated at the $j$-th step
  of AATGS($\infty$) is equal to  $(I -\beta_j A) \bar f_j$ where $\bar f_j = b - A \bar x_j$ 
  minimizes the residual norm $\| b - A x\|_2$  over all vectors $x$ in the affine space $x_0 + \mathcal{K}_j(A,f_0)$.
  In other words, the $(j+1)$-st iterate of AATGS($\infty$) can be obtained by performing one step of a Richardson iteration applied to the
  $j$-th GMRES iterate.
\end{proposition}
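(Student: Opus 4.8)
The plan is to treat this proposition as a direct corollary: essentially all of the substantive content already resides in Lemma~\ref{lemma:QAU} and Theorem~\ref{lem:opt1}, and what remains is to assemble the residual recurrence and to recognize its Richardson form. I would begin from the update rule in Line~15 of Algorithm~\ref{alg:TGS}, written compactly in \eqref{eq:AA2N} as $x_{j+1} = \bar x_j + \beta_j \bar f_j$, and compute the residual $f_{j+1} = b - A x_{j+1}$ by applying the affine map $x \mapsto b - Ax$ to this expression.

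The one identity I would invoke is \eqref{eq:fbarj}, namely $\bar f_j = b - A\bar x_j$, which says that $\bar f_j$ is the genuine linear residual at $\bar x_j$; this in turn relies on the pairing $Q_j = -AU_j$ established in Lemma~\ref{lemma:QAU}, so I would cite that lemma. Substituting the update gives $f_{j+1} = b - A\bar x_j - \beta_j A\bar f_j$, and replacing $b - A\bar x_j$ by $\bar f_j$ collapses this to $f_{j+1} = (I - \beta_j A)\bar f_j$, which is exactly \eqref{eq:fjp1} and establishes the first assertion.

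For the optimality/GMRES statement I would appeal directly to Theorem~\ref{lem:opt1}: the vector $\bar x_j$ minimizes $\|b - Ax\|_2$ over the affine space $x_0 + \mathcal{K}_j(A,f_0)$, which is precisely the defining property of the $j$-th GMRES iterate started at $x_0$; hence $\bar x_j$ is that iterate and $\bar f_j$ its residual. To expose the Richardson step I would rewrite the update once more, using \eqref{eq:fbarj}, as $x_{j+1} = \bar x_j + \beta_j(b - A\bar x_j)$, which is literally one step of the stationary iteration $x \mapsto x + \beta_j(b - Ax)$ applied to the GMRES iterate $\bar x_j$; this completes the claim.

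Because the proposition is a packaging of earlier results, I do not anticipate a genuine obstacle. The only care needed is that the no-breakdown hypothesis at step $j+1$ guarantees that $U_j$, and hence $\bar x_j$ and $\bar f_j$, are well defined, so that \eqref{eq:fbarj} and the Krylov characterization of Lemma~\ref{lemma:QAU} actually apply at this step. The real mathematical work—the identification of the AATGS($\infty$) subspace with $\mathcal{K}_j(A,f_0)$ and the resulting GMRES optimality of $\bar x_j$—was already carried out in those earlier results, and the present argument merely combines the residual recursion \eqref{eq:fjp1} with that characterization.
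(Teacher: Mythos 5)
Your proposal is correct and matches the paper's own argument: the paper likewise derives $f_{j+1}=(I-\beta_j A)\bar f_j$ in \eqref{eq:fjp1} by substituting $x_{j+1}=\bar x_j+\beta_j\bar f_j$ into the residual and using $\bar f_j = b - A\bar x_j$ from \eqref{eq:fbarj} (which rests on $Q_j=-AU_j$ from Lemma~\ref{lemma:QAU}), then cites Theorem~\ref{lem:opt1} for the GMRES optimality of $\bar x_j$. Nothing further is needed.
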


A similar result has also been proved for the standard AA by Walker and Ni \cite{homer11} under slightly different assumptions.

\subsection{Short-term recurrence in AATGS for linear symmetric problems}\label{sec:st_aatgs}
We now show that the orthogonalization process (Lines 6-10 of Algorithm
\ref{alg:TGS}) simplifies in the linear symmetric case under consideration.
Indeed, we will see that $S_j$ consists of only 3 non-zero diagonals in the
upper triangular part when $A$ is symmetric. This implies that we only need to save
$q_{j-2}, q_{j-1}$ and $u_{j-2}, u_{j-1}$ in order to generate $q_j$ and $u_j$
in the full-depth AATGS($\infty$). Before we
prove this result, we first examine the 
components of the vector $Q^{\top}_jf_j$ in Line 14
of Algorithm \ref{alg:TGS}.

\begin{lemma}
  When $f(x)=b-Ax$ where $A$ is a real non-singular symmetric matrix
  then the entries of the vector ${\theta}_j=Q^T_jf_{j}$ in Algorithm \ref{alg:TGS} are all zeros except the last two.
    \label{lemma:twoentries}
\end{lemma}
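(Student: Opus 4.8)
The plan is to compute each component of $\theta_j$ directly as an inner product and to exploit the minimal-residual orthogonality of the GMRES residual $\bar f_{j-1}$ together with the symmetry of $A$. Since $\theta_j = Q_j^\top f_j$, its $i$-th entry is $(q_i, f_j)$, and in the full-depth case $Q_j = [q_1, \ldots, q_j]$; so the task reduces to showing $(q_i, f_j) = 0$ for every $i \le j-2$.

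First I would invoke \eqref{eq:fjp1}, shifted by one index, to write $f_j = (I - \beta_{j-1} A)\bar f_{j-1}$, which gives $(q_i, f_j) = (q_i, \bar f_{j-1}) - \beta_{j-1}(q_i, A\bar f_{j-1})$. Because $\theta_{j-1} = Q_{j-1}^\top f_{j-1}$ and $Q_{j-1}$ has orthonormal columns, the vector $\bar f_{j-1} = f_{j-1} - Q_{j-1}\theta_{j-1}$ satisfies $Q_{j-1}^\top \bar f_{j-1} = 0$, i.e.\ $\bar f_{j-1}$ is orthogonal to $\mathrm{span}(Q_{j-1})$. Hence for every $i \le j-1$ the first term vanishes, and using the symmetry of $A$ I obtain $(q_i, f_j) = -\beta_{j-1}(Aq_i, \bar f_{j-1})$.

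The key structural step is to locate $Aq_i$ inside the span of the basis. By Lemma \ref{lemma:QAU}, $\mathrm{span}(Q_i) = A\,\mathcal{K}_i(A, f_0)$, so $q_i \in A\,\mathcal{K}_i(A,f_0)$ and therefore $Aq_i \in A\cdot A\,\mathcal{K}_i(A,f_0) \subseteq A\,\mathcal{K}_{i+1}(A, f_0) = \mathrm{span}(Q_{i+1})$. Consequently, whenever $i+1 \le j-1$, that is $i \le j-2$, we have $Aq_i \in \mathrm{span}(Q_{j-1})$, and the orthogonality $\bar f_{j-1} \perp \mathrm{span}(Q_{j-1})$ forces $(Aq_i, \bar f_{j-1}) = 0$, whence $(q_i, f_j) = 0$. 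Only the entries indexed by $i = j-1$ and $i = j$ can survive, which is precisely the claim.

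The main obstacle is the interplay of symmetry and Krylov containment: moving $A$ onto $q_i$ inside the inner product is exactly what requires $A = A^\top$, and the single-degree gain $A\cdot A\,\mathcal{K}_i \subseteq A\,\mathcal{K}_{i+1}$ is what yields the sharp two-term (rather than one-term) tail. I would take care to phrase everything for the full-depth setting $m=\infty$, where $Q_j$ genuinely spans $A\,\mathcal{K}_j(A,f_0)$, and to note that the non-breakdown hypothesis guarantees that $Q_{j-1}$ is orthonormal, so that $\bar f_{j-1}$ and the orthogonality relation used above are well defined.
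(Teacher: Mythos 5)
Your proposal is correct and follows essentially the same route as the paper's proof: expand $f_j = (I-\beta_{j-1}A)\bar f_{j-1}$, kill the first term via $\bar f_{j-1} \perp \mathrm{Span}\{Q_{j-1}\}$, move $A$ across the inner product by symmetry, and use Lemma \ref{lemma:QAU} to place $Aq_i$ in $\mathrm{Span}\{Q_{i+1}\}$ so that the second term vanishes for $i \le j-2$. Your phrasing of the containment step as $Aq_i \in A\cdot A\,\mathcal{K}_i(A,f_0) \subseteq A\,\mathcal{K}_{i+1}(A,f_0) = \mathrm{Span}\{Q_{i+1}\}$ is just a cleaner restatement of the paper's argument via $Au_i = U_{i+1}y$.
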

\begin{proof}
 Let $i\leq j-1$. From \eqref{eq:fjp1}, we have 
    \[
    (f_{j},q_i) = (\bar{f}_{j-1}-\beta_{j-1}A\bar{f}_{j-1},q_i) = (\bar{f}_{j-1},q_i)-\beta_{j-1}(A\bar{f}_{j-1},q_i).
    \]
    The first term equals zero because $(f_{j-1}-Q_{j-1}{\theta}_{j-1},q_i)=((I-Q_{j-1}Q_{j-1}^T)f_{j-1},q_i)=0$. Consider the second term:
    \[
    (A\bar{f}_{j-1},q_i)=(\bar{f}_{j-1},Aq_i).
    \]

Observe that since $u_i\in\mathcal{K}_{i}(A,f_0)$,
then $q_i=-Au_i$ belongs to the Krylov subspace
$\mathcal{K}_{i+1}(A,f_0)$ which is the same as $\text{Span}  \{ U_{i+1} \}$
according to Lemma \ref{lemma:QAU}. Thus,
it can be written as $ Au_i = U_{i+1} y$ for some $y$ and
hence, $A q_i = -A U_{i+1} y = Q_{i+1} y$, i.e., $Aq_i$ is in the span
of $q_1, \cdots, q_{i+1} $.
    Therefore, recalling that $\bar{f}_{j-1} \perp \text{Span}  \{Q_{j-1}\}$, we have: 
    \[
    (\bar{f}_{j-1},Aq_i)=0 \quad \text{for} \quad i\leq j-2.
    \]
    In the end, we obtain $(f_{j},q_i)=0$ for $i\leq j-2$.
\end{proof}
Lemma \ref{lemma:twoentries} indicates that the computation of $x_{j+1}$ in Line
15 of Algorithm \ref{alg:TGS} only depends on the two most recent $q_{i}$'s and
$u_{i}$'s. The next theorem will further show that $q_j$ and $u_j$ in Line 12 can be
computed based on $q_{j-2},q_{j-1}$ and $u_{j-2},u_{j-1}$ instead of all
previous $q_i$'s and $u_i$'s.
\begin{theorem}
    When $f(x)=b-Ax$ where $A$ is a real non-singular symmetric matrix, then the upper triangular matrix $S_k$ is banded with bandwidth $3$, i.e., we have $s_{ij}=0$ for $i<j-2$. 
    \label{thm:thetaj}
\end{theorem}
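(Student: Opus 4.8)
The plan is to reduce the banded structure of $S_j$ to a statement about inner products of residuals with the basis vectors $q_i$, and then to invoke Lemma~\ref{lemma:twoentries} twice. First I would record the identity $s_{ij} = (\Delta f_{j-1}, q_i)$ for all $i < j$. This follows because the columns of $Q_j$ are orthonormal: in Lines 6--10 the coefficient $s_{ij}$ is formed as $(q, q_i)$ where $q$ is the running vector $\Delta f_{j-1} - \sum_{k} s_{kj} q_k$ (the sum over the already-processed indices $k = j_m+1, \ldots, i-1$), and since each such $q_k$ is orthogonal to $q_i$, subtracting those components does not alter the $q_i$-component. Hence the modified Gram--Schmidt coefficient coincides, in exact arithmetic, with the direct projection of the original starting vector $\Delta f_{j-1} = f_j - f_{j-1}$ onto $q_i$.

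With this identity in hand, I would split
\[
s_{ij} = (f_j - f_{j-1}, q_i) = (f_j, q_i) - (f_{j-1}, q_i).
\]
Lemma~\ref{lemma:twoentries} asserts that the components $(f_j, q_i)$ of $\theta_j = Q_j^{\top} f_j$ vanish for $i \le j-2$; applying the very same lemma at step $j-1$ yields $(f_{j-1}, q_i) = 0$ for $i \le j-3$. Consequently, whenever $i \le j-3$ both terms vanish and $s_{ij} = 0$. Since $i \le j-3$ is the same constraint as $i < j-2$, this is exactly the claimed bandwidth-$3$ structure, and the conclusion that $S_j$ has at most three nonzero upper diagonals follows.

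The substantive content is carried entirely by Lemma~\ref{lemma:twoentries}, whose proof already exploits the symmetry of $A$ (through $A q_i = Q_{i+1} y$) together with the secant pairing $Q_j = -A U_j$ of Lemma~\ref{lemma:QAU}; the present argument adds only bookkeeping. The one point demanding care is the index shift: the binding constraint is $i \le (j-1)-2 = j-3$ coming from the term $(f_{j-1}, q_i)$, which is \emph{stronger} than the constraint $i \le j-2$ arising at step $j$, and it is precisely this tighter bound that produces bandwidth $3$ rather than $2$. I would also verify that the hypotheses of Lemma~\ref{lemma:twoentries} are unchanged at step $j-1$ (they are, provided the algorithm has not broken down earlier), and note that for $j \le 3$ the statement is vacuous so no boundary case needs separate treatment. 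I would close by remarking that the three-diagonal form of $S_j$ immediately implies that $q_j$ and $u_j$ can be generated from only $q_{j-1}, q_{j-2}$ and $u_{j-1}, u_{j-2}$, which is the short-term recurrence sought in this section.
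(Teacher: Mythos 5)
Your proof is correct, and it takes a mildly but genuinely different route from the paper's. Both arguments rest on the orthogonality facts of Lemma~\ref{lemma:twoentries}, and both use (the paper implicitly, you explicitly) the fact that in exact arithmetic the modified Gram--Schmidt coefficient $s_{ij}$ equals the direct projection $(\Delta f_{j-1}, q_i)$ since the $q_k$'s are orthonormal in the full-depth setting. The difference lies in the decomposition: the paper writes $s_{i,j+1} = -(A\Delta x_j, q_i)$, expands $\Delta x_j = -U_j\theta_j + \beta_j \bar f_j$ via the update formula to obtain $A\Delta x_j = -\bar f_j + f_j + \beta_j A\bar f_j$, and kills the three inner products term by term --- citing Lemma~\ref{lemma:twoentries} only for the middle term and reusing the lemma's \emph{internal} argument to get $(A\bar f_j, q_i) = 0$ for $i \le j-1$. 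You instead telescope directly, $s_{ij} = (f_j, q_i) - (f_{j-1}, q_i)$, and invoke the \emph{statement} of Lemma~\ref{lemma:twoentries} at the two consecutive steps $j$ and $j-1$. Your version is shorter and exhibits the theorem as a pure corollary of the lemma, and your remark that the binding constraint $i \le j-3$ comes from the $(f_{j-1}, q_i)$ term correctly explains why the bandwidth is $3$ rather than $2$; what the paper's expansion buys in exchange is term-by-term visibility of which orthogonality property ($\bar f_j \perp \text{Span}\{Q_j\}$, the structure of $\theta_j$, or $Aq_i \in \text{Span}\{Q_{i+1}\}$) is responsible for each vanishing range. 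Your attention to the index shift, the non-breakdown hypothesis at step $j-1$, and the vacuity of the claim for $j \le 3$ are all in order.
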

\begin{proof}
    It is notationally more convenient to consider column $j+1$ of $S_k$ where $k>j$.   Denote $\Delta f_j=f_{j+1}-f_{j}$, and $\Delta x_j=x_{j+1}-x_{j}$. Consider $s_{i,j+1}=(\Delta f_{j},q_i)$ for $i\leq j$ and note that $s_{i,j+1} = -(A\Delta x_{j},q_i)$. We note that
    \[
        \Delta x_{j} = x_{j+1} - x_{j} = \bar{x}_j+\beta_j\bar{f}_j-x_j = x_j-U_j{\theta}_{j} + \beta_j\bar{f}_j-x_j = -U_j{\theta}_{j} + \beta_j\bar{f}_j.
    \]
    We write 
    \begin{align*}
        A\Delta x_{j} &= -AU_j{\theta}_{j}+\beta_jA\bar{f}_j = Q_j{\theta}_{j}+\beta_jA\bar{f}_j\\
        &= -(f_j-Q_j{\theta}_{j})+f_j+\beta_jA\bar{f}_j\\
        &= -\bar{f}_j + f_j+\beta_jA\bar{f}_j,
    \end{align*}
and hence, 
\begin{equation}
    (A\Delta x_{j},q_i) = -(\bar{f}_j,q_i)+(f_j,q_i)+\beta_j(A\bar{f}_j,q_i).
\end{equation}
The first term on the right-hand side, $(\bar{f}_j, q_i)$ vanishes since $i\leq j$. According to Lemma \ref{lemma:twoentries} the inner product $(f_j, q_i)$ is zero for $i\leq j-2$. The last term $(A\bar{f}_j,q_i)$ is equal to zero when $i\leq j-1$ as shown in the proof of Lemma \ref{lemma:twoentries}.  This completes the proof as it shows that $s_{i,j+1}=0$ for $i<j-1$.
\end{proof}

Lemma \ref{lemma:twoentries} and Theorem \ref{thm:thetaj} show that when
AATGS($\infty$) is applied to solving linear symmetric problems, only the two most
recent $q_{j-2}, q_{j-1}$ and $u_{j-2}, u_{j-1}$ are needed to compute the next
iterate $x_{j+1}$, which significantly reduces both memory and orthogonalization
costs. In other words, AATGS(3) is equivalent to AATGS($\infty$) in the linear symmetric
case.

\begin{corollary}
    When $f(x) = b - Ax$ where $A$ is non-singular and real symmetric,
    AATGS(3) is equivalent to AATGS($\infty$). 
\end{corollary}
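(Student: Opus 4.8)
The plan is to prove the equivalence by induction on the step index $j$, carrying the invariant that AATGS(3) and AATGS($\infty$) generate exactly the same vectors $q_i, u_i$ and the same iterates $x_i$ (hence the same residuals $f_i$) for every $i$ up through the current step. Once this invariant is in force, the corollary is almost immediate from Lemma \ref{lemma:twoentries} and Theorem \ref{thm:thetaj}: the former says that only the last two coordinates of $\theta_j$ are nonzero, and the latter says that $S_j$ has bandwidth $3$, so that the orthogonalization producing $q_j, u_j$ involves only $q_{j-2}, q_{j-1}$ and $u_{j-2}, u_{j-1}$. The substance of the argument is thus the bookkeeping needed to confirm that the three-vector window kept by AATGS(3) contains precisely the vectors that the full-depth process actually touches.

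For the base case I would invoke the observation made in the text just before Algorithm \ref{alg:TGS}: when $j \le m = 3$ we have $j_m = 0$, so $Q_j = [q_1,\ldots,q_j]$ and no truncation occurs; AATGS(3) and AATGS($\infty$) perform identical operations and therefore agree on $q_i, u_i, x_i$ for $i \le 3$. For the inductive step, assume both runs have produced the same $q_i, u_i$ for $i \le j-1$ and the same $x_i$ (hence $f_i$) for $i \le j$, and consider the construction of $q_j, u_j$. In AATGS($\infty$) this is modified Gram-Schmidt of $\Delta f_{j-1}$ against $q_1,\ldots,q_{j-1}$ with coefficients $s_{ij}=(\Delta f_{j-1},q_i)$; since the $q_i$ form an orthonormal set, these coefficients agree with the classical Gram-Schmidt ones and are independent of the order of processing. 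By Theorem \ref{thm:thetaj}, $s_{ij}=0$ for $i<j-2$, so the subtractions against $q_1,\ldots,q_{j-3}$ leave the running vector unchanged, and $q_j$ (and likewise $u_j$) is determined solely by $q_{j-2},q_{j-1}$ and $u_{j-2},u_{j-1}$. These are exactly the two vectors that the AATGS(3) window holds at the start of step $j$, so the truncated run computes the same $s_{j-2,j},s_{j-1,j}$ and hence the identical $q_j,u_j$.

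It remains to match the iterate update in Line~15. In AATGS($\infty$) we have $\theta_j=Q_j^\top f_j$ with $Q_j=[q_1,\ldots,q_j]$; by Lemma \ref{lemma:twoentries} every entry vanishes except $(f_j,q_{j-1})$ and $(f_j,q_j)$, so $x_{j+1}$ depends only on $q_{j-1},q_j,u_{j-1},u_j$. In AATGS(3) the retained bases are $Q_j=[q_{j-2},q_{j-1},q_j]$ and $U_j=[u_{j-2},u_{j-1},u_j]$, and the computed $\theta_j$ has coordinates $(f_j,q_{j-2}),(f_j,q_{j-1}),(f_j,q_j)$. Because the vectors and $f_j$ coincide with the full-depth run by the inductive hypothesis, Lemma \ref{lemma:twoentries} forces $(f_j,q_{j-2})=0$, while the other two coordinates match those of AATGS($\infty$). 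Substituting into $x_{j+1}=(x_j-U_j\theta_j)+\beta_j(f_j-Q_j\theta_j)$ then gives the same $x_{j+1}$, and hence the same $f_{j+1}$, closing the induction.

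The main obstacle, and the reason a bare citation of Lemma \ref{lemma:twoentries} and Theorem \ref{thm:thetaj} does not by itself finish the proof, is that both results were established within the full-depth AATGS($\infty$) recurrence and cannot be quoted for the truncated run until one already knows the two runs have produced identical data up to the present step; the induction is exactly the mechanism that transfers these $\infty$-depth facts to AATGS(3) one step at a time. A secondary point needing care is confirming that a window of size precisely $3$ is necessary and sufficient: two stored vectors $q_{j-2},q_{j-1}$ are required to build $q_j$, after which the triple $\{q_{j-2},q_{j-1},q_j\}$ is required to assemble $\theta_j$, so no fewer than three slots, and no more, are ever in use.
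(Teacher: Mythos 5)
Your proposal is correct and follows the same route as the paper, which derives the corollary directly from Lemma \ref{lemma:twoentries} (only the last two entries of $\theta_j$ are nonzero) and Theorem \ref{thm:thetaj} ($S_j$ has bandwidth $3$). The paper leaves the transfer of these full-depth facts to the truncated run implicit, whereas your induction on $j$ --- carrying the invariant that both runs produce identical $q_i$, $u_i$, $x_i$, $f_i$ --- makes that step explicit and rigorous; this is a legitimate tightening rather than a different argument.
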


Staying with the linear case, the next theorem examines the convergence rate of
AATGS($\infty$) when $A$ is symmetric positive definite.

\begin{theorem}
\label{thm:convegencespd}
Assume that $A$ is symmetric positive definite and that a constant $\beta$ is used in
AATGS. Then we have the following error bound for the residual  
$r^{AATGS}_{j+1}$ obtained  at the $(j+1)$-st step of AATGS($\infty$): 
\begin{equation}
       \Vert r^{AATGS}_{j+1}\Vert_2
       \leq 2\Vert I-\beta A
       \Vert_2\left(\frac{\sqrt{\kappa(A)}-1}{\sqrt{\kappa(A)}+1}\right)^j \Vert r_0\Vert_2,
\end{equation}
where $\kappa(A)$ is the 2-norm condition number of $A$.
\end{theorem}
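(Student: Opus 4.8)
The plan is to combine the two results already established for the linear case, namely the GMRES optimality of $\bar x_j$ (Theorem \ref{lem:opt1}) and the Richardson relation $f_{j+1}=(I-\beta_j A)\bar f_j$ derived in \eqref{eq:fjp1}. First I would specialize \eqref{eq:fjp1} to the constant step $\beta_j\equiv\beta$ and take norms, using $r^{AATGS}_{j+1}=f_{j+1}$:
\[
\|r^{AATGS}_{j+1}\|_2 = \|(I-\beta A)\bar f_j\|_2 \le \|I-\beta A\|_2\,\|\bar f_j\|_2 .
\]
This isolates the prefactor $\|I-\beta A\|_2$ that appears in the claimed bound and reduces everything to estimating the GMRES residual norm $\|\bar f_j\|_2$.

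Second, I would invoke Theorem \ref{lem:opt1}, which identifies $\bar x_j$ as the $j$-th GMRES iterate for $Ax=b$ with initial guess $x_0$, so that $\bar f_j=b-A\bar x_j$ is the corresponding residual. Since this residual minimizes $\|b-Ax\|_2$ over $x_0+\mathcal{K}_j(A,f_0)$ and $r_0=f_0=b-Ax_0$, it can be written as $\bar f_j=p_j(A)r_0$ with $p_j$ ranging over polynomials of degree at most $j$ normalized by $p_j(0)=1$, and the minimizing choice gives
\[
\|\bar f_j\|_2 = \min_{p(0)=1,\ \deg p\le j}\ \|p(A)r_0\|_2 .
\]

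Third, and this is the only place where symmetric positive definiteness enters, I would use the spectral decomposition $A=V\Lambda V^\top$ with $V$ orthogonal. For any polynomial $p$ this yields $\|p(A)\|_2=\max_i|p(\lambda_i)|\le\max_{t\in[\lambda_{\min},\lambda_{\max}]}|p(t)|$, hence
\[
\|\bar f_j\|_2 \le \Big(\min_{p(0)=1,\ \deg p\le j}\ \max_{t\in[\lambda_{\min},\lambda_{\max}]}|p(t)|\Big)\|r_0\|_2 .
\]
The inner min-max is the classical Chebyshev approximation problem on the positive interval $[\lambda_{\min},\lambda_{\max}]$: taking $p_j$ to be the degree-$j$ Chebyshev polynomial on that interval, shifted and scaled so that $p_j(0)=1$, produces the standard estimate $2\big((\sqrt{\kappa}-1)/(\sqrt{\kappa}+1)\big)^j$ with $\kappa=\lambda_{\max}/\lambda_{\min}=\kappa(A)$. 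Substituting this into the two preceding displays gives the stated bound.

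The argument is essentially an assembly of known facts, so I do not anticipate a serious obstacle. The one point demanding care is the passage from the discrete maximum over the eigenvalues $\lambda_i$ to the continuous maximum over the spectral interval, together with the correct normalization of the Chebyshev polynomial at the origin, which is exactly what produces the leading factor $2$ and the $\sqrt{\kappa}$ dependence. The symmetry of $A$ is indispensable here: it is what guarantees the clean identity $\|p(A)\|_2=\max_i|p(\lambda_i)|$ with no eigenvector-conditioning factor, so that the Chebyshev bound transfers directly to $\|\bar f_j\|_2$.
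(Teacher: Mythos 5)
Your proposal is correct and follows essentially the same route as the paper's proof: both isolate the Richardson factor via $\Vert r^{AATGS}_{j+1}\Vert_2 \le \Vert I-\beta A\Vert_2\,\Vert \bar f_j\Vert_2$ using \eqref{eq:fjp1} (Proposition \ref{prop:opt1}), identify $\bar f_j$ with the $j$-th GMRES residual through Theorem \ref{lem:opt1}, and bound the resulting polynomial min-max on $[\lambda_{\min},\lambda_{\max}]$ by the shifted-and-scaled Chebyshev polynomial to obtain the factor $2\bigl((\sqrt{\kappa(A)}-1)/(\sqrt{\kappa(A)}+1)\bigr)^j$. The only cosmetic difference is that the paper spells out the last step explicitly via $T_j\left(1+2\frac{\lambda_1}{\lambda_n-\lambda_1}\right)$ and the lower bound $T_j(\lambda)\ge\frac12\left(\lambda+\sqrt{\lambda^2-1}\right)^j$, which you instead invoke as the standard Chebyshev estimate.
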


\begin{proof} 
Based on Proposition 2.3, we have
\begin{equation*}
\Vert r^{AATGS}_{j+1}\Vert_2 = \Vert (I-\beta A) r_{j}^{GMRES}\Vert_2,
\end{equation*}
where $r_{j}^{GMRES}$ denotes the residual associated with the $j$-th iterate from GMRES.
Since $A\in\mathbb{R}^{n\times n}$ is symmetric, it admits the following eigendecomposition:
\begin{equation}
    A = {U}{\Lambda}{U}^{\top}, \quad {U^\top} {U} = I, \quad \Lambda = \operatorname{diag}(\lambda_1,\dots,\lambda_{n}),
\end{equation}
where $ 0 < \lambda_1 \le \lambda_2 \le \cdots \le \lambda_n $.
It is known that the GMRES residual vector can be expressed as
\begin{equation}
    r^{GMRES}_{j} = \rho (A)r_0={U}\rho ({\Lambda}){U}^{\top}r_{0}, ~~~ \rho \in \mathcal{P}_{j},
\end{equation}
where $\mathcal{P}_{j}$ is the affine space of polynomials  $p$ of degree $j$ such that
$p(0) = 1$  and 
 \begin{align}
   \|\rho (A)r_0\|_2
   &=\min_{p\in\mathcal{P}_j} \Vert p(A)r_0\Vert_2\leq
     \min_{p\in \mathcal{P}_{j}}\max_{i}| p(\lambda_i)|\Vert r_0\Vert_2 \\
   & \le\min_{p\in \mathcal{P}_{j}} \ \ \max_{\lambda \in [\lambda_1, \lambda_n]}| p(\lambda)|\Vert r_0\Vert_2 \\
   &   \le \frac{\Vert r_0\Vert_2}{T_{j}(1 + 2\frac{\lambda_1}{\lambda_n - \lambda_1})},
\end{align}
where $T_{j}$ is the Chebyshev polynomial of first kind of degree $j$.
The last inequality follows from well-known results on the optimality properties of Chebyshev polynomials,
see, e.g., \cite{Saad-book2}.
Also note that since
\begin{equation*}
    T_j(\lambda)\geq \frac12\left(\lambda+\sqrt{\lambda^2-1}\right)^j,
\end{equation*}
we have 
\begin{equation*}
T_j\left(1 + 2\frac{\lambda_1}{\lambda_n - \lambda_1}\right)\geq \frac12\left(\frac{\sqrt{\kappa(A)}+1}{\sqrt{\kappa(A)}-1}\right)^j.
\end{equation*}

Thus, we obtain  
\begin{align*}
\Vert r^{AATGS}_{j+1}\Vert_2 &= 
  \Vert (I-\beta A) r_{j}^{GMRES}\Vert_2\\ &= \Vert (I-\beta A)\rho (A)r_0\Vert_2 \\
    & \leq \Vert (I-\beta A)\Vert_2 \Vert \rho (A) r_{0}\Vert_2\\
& \leq 
\frac{\Vert (I-\beta A)\Vert_2}{T_{j}(1 + 2\frac{\lambda_1}{\lambda_n - \lambda_1})}\Vert r_0\Vert_2 \\
& \leq  
2\Vert I-\beta A
\Vert_2\left(\frac{\sqrt{\kappa(A)}-1}{\sqrt{\kappa(A)}+1}\right)^j \Vert r_0\Vert_2.
\end{align*}
This completes the proof. 
\end{proof}

The convergence results can be generalized to the case where the eigenvalues of $A$ are distributed in two intervals excluding the origin. This result is omitted. 

Another case of interest is when $A$ is skew-symmetric.
In this situation, when  the $\beta_j$'s are constant, it can be seen that the AATGS algorithm  yields
$x_2= x_1$ after the first iteration, and consequently, the process breaks at Line 12 due to $s_{22}$
being equal to zero. To circumvent this problem, one could adjust 
$\beta_j$ at each iteration. Alternatively, reformulating the problem $f(x)$
itself presents another viable strategy. An example demonstrating this approach
is provided in Section \ref{sec:minimax} for solving minimax optimization
problems. Note that there is no issue in the interesting case when $A$ is of the form
$A = I + S$ where $S$ is skew-symmetric, for which it can be shown that we do have a simplification similar to that
of the symmetric case. 

 \subsection{Limited-depth AATGS} 
We now explore the limited-depth version of AATGS($m$) for a fixed $m$.   Recall the notation $j_m = \max \{0,j-m\}$.   
At step $j$, Algorithm \ref{alg:TGS} orthogonalizes the latest  $\Delta f$ vector against
$q_{j_m+1}, ..., q_{j-1}$ to produce $q_j$. We set ${U}_j \equiv [u_{j_{m}+1}, u_{j_m+2}, \cdots, u_{j-1},u_j]$ and 
${Q}_j \equiv [q_{j_{m}+1}, q_{j_m+2}, \cdots, q_{j-1},q_j]$ in Line 13. Note that $U_j$ and $Q_j$ have
$\min \{j, m\}$ columns, which is the same number of  columns as the block ${\cal F}_j$ in 
Anderson Acceleration. 
As it turns out, $\bar x_j=x_{j}-{U}_j{\theta}_j$ satisfies a similar result to that of
Theorem~\ref{lem:opt1}.

\begin{proposition}\label{lem:opt2} The intermediate iterate 
  $\bar x_j=x_{j}-{U}_j{\theta}_j$ obtained at the $j$-th step of AATGS(m) minimizes $\| b  - Ax \|_2$ over all vectors $x$ of the form
$x = x_{j} - \delta $ where $\delta \in \text{Span} \{{U}_j \}$.
\end{proposition}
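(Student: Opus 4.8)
The plan is to follow the first half of the proof of Theorem~\ref{lem:opt1}, whose core least-squares argument never actually invoked the full-depth assumption; the only ingredient I need to transplant is the pairing relation $Q_j = -A U_j$. My first step is therefore to verify that this relation persists under truncation. The induction establishing $q_j = -A u_j$ in Lemma~\ref{lemma:QAU} is entirely \emph{local}: it starts from the linear identity $\Delta f_{j-1} = -A\,\Delta x_{j-1}$ (immediate from $f(x)=b-Ax$ and relation \eqref{eq:APj}), and at each pass of the inner loop it applies the \emph{same} scalar $s_{ij}$ to update both $u$ and $q$. Since the truncated loop in Lines~6--10 subtracts only multiples of $q_{j_m+1},\dots,q_{j-1}$---each of which satisfies $q_i = -A u_i$ by the inductive hypothesis---the update preserves $q = -A u$, and normalization by $s_{jj}\neq 0$ preserves it as well. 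Hence $Q_j = -A U_j$ holds for every window size $m$, not merely for $m=\infty$.

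With this in hand the optimality argument is immediate and mirrors \eqref{eq:prf1}. For any $x = x_j - \delta$ with $\delta = U_j y \in \text{Span}\{U_j\}$ I would write
\eq{eq:opt2prf}
b - Ax = b - A(x_j - U_j y) = f_j + A U_j y = f_j - Q_j y,
\en
using $A U_j = -Q_j$. Because $Q_j$ has orthonormal columns, the quantity $\|f_j - Q_j y\|_2$ is minimized precisely at $y = Q_j^{\top} f_j = \theta_j$, and the minimizing vector is $\bar x_j = x_j - U_j\theta_j$, which proves the claim.

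The subtle point---and what I would flag as the crux---is \emph{which} relation survives truncation and which does not. The argument above rests solely on $Q_j = -A U_j$, a pairwise identity that is immune to discarding old columns. By contrast, the extension in Theorem~\ref{lem:opt1} from $\text{Span}\{U_j\}$ to the full Krylov space $x_0 + \K_j(A,f_0)$ relied on $\X_j = U_j S_j$ from \eqref{eq:QRrel}, which genuinely requires the \emph{complete} QR factorization and, as noted in the text, fails once $j > m$. Consequently I must resist the temptation to reproduce the telescoping expansion \eqref{eq:lem2.2prf1}--\eqref{eq:lem2.2prf2}: in the limited-depth case $\text{Span}\{U_j\}$ is no longer a Krylov subspace and the affine sets $x_j + \text{Span}\{U_j\}$ and $x_0 + \K_j(A,f_0)$ need not coincide. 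The proposition is therefore correctly stated as the weaker optimality over $x_j + \text{Span}\{U_j\}$ alone, and no stronger conclusion is available by this route.
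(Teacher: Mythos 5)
Your proof is correct and follows essentially the same route as the paper: both reduce the residual to $b-Ax = f_j - Q_j y$ via the pairing $AU_j=-Q_j$ and observe that $\theta_j = Q_j^{\top}f_j$ is the minimizer. The one difference is to your credit: you explicitly verify that the relation $Q_j=-AU_j$ survives truncation (the induction in Lemma~\ref{lemma:QAU} being local to the window), a step the paper passes over by simply asserting that Equation~\eqref{eq:prf1} ``still holds,'' and your remark on why the stronger $x_0+\mathcal{K}_j(A,f_0)$ conclusion fails for $j>m$ correctly identifies the role of $\mathcal{X}_j=U_jS_j$.
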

 
\begin{proof}
We consider a generic vector $x =  x_j - \delta $ where
$\delta \ \in \ \text{span} \{{U}_{j} \} $ which we write as $\delta = {U}_j y$.
Then Equation~\eqref{eq:prf1} in the proof of Theorem~\ref{lem:opt1}
still holds, i.e., we can write
$ r\equiv b - A x  = f_j - {Q}_j y .$
It is known that the residual norm is minimal iff
$r\perp \text{Span} \{ {Q}_j \}$, i.e., iff:
$f_j - {Q}_j y \perp \text{Span} \{ {Q}_j \}$ which is precisely the condition
imposed to get ${\theta}_j $.
This  means that $\bar x_j$   minimizes $\| b  - Ax \|_2$ over all
  vectors $x$ of the form
  $x = x_{j} - \delta $ where $\delta \in \text{Span} \{{U}_j \}$.
\end{proof} 
Note that this result is a little weaker than that of Theorem~\ref{lem:opt1} which allowed the
affine spaces on which the residual norm is minimized to be of the form
$x_k + \text{Span} \{ U_j \}$ for any $k$ between 0 and $j$.  
Similar to the full-depth case, 
we may now ask whether the vector $\bar x_j$ corresponds to
the result of some other classical algorithms for linear systems.
One may think that there should exist an equivalence with a similar method such as Truncated GCR
(TGCR also known as ORTHOMIN, see e.g.,  \cite{Saad-book2}) or one of the other
Krylov methods that rely on truncation in the orthogonalization, e.g., 
ORTHODIR, or DQGMRES \cite{Saad-book2}. While this is possible, we did not find an obvious
result that showed such an equivalence.

\section{Restarting AATGS}
\label{sec:restarting}
Lines 6-12 of Algorithm~\ref{alg:TGS} carry-out an orthonormalization of the vector
$q_j$ versus $q_{j_m+1}, \cdots, q_{j-1}$ and imposes the same operations undergone by the sequence
$\{ q_i \}$ to the sequence $\{u_i\}$. While the columns of 
$Q_j$ are orthonormal, those of $U_j$ are not, and they are prone to numerical instability.
Therefore, it is essential to check for the onset of instability, especially when the 
problem is neither linear nor positive definite. To take advantage of the
short-term recurrence while also preserving accuracy, we introduce a lightweight
strategy to determine when a restart is deemed necessary. 

Using the same notation as in Algorithm \ref{alg:TGS}, the propagation of $U_j$
can be expressed in the following matrix form:
\begin{equation}
    \begin{bmatrix}
        u_{j_m+2}^T \\
        \vdots \\
        u_{j-1}^T \\
        u_{j}^T
    \end{bmatrix}
    =
    \begin{bmatrix}
        0 & 1 \\
        \vdots & & \ddots \\
        0 & & & 1 \\
        -\frac{s_{j_m+1,j}}{s_{jj}} & -\frac{s_{j_m+2,j}}{s_{jj}} & \cdots & -\frac{s_{j-1,j}}{s_{jj}} \\
    \end{bmatrix}
    \begin{bmatrix}
        u_{j_m+1}^T \\
        \vdots \\
        u_{j-2}^T \\
        u_{j-1}^T
    \end{bmatrix}
    +
    \frac{1}{s_{jj}}
    \begin{bmatrix}
        0 \\
        \vdots \\
        0 \\
        \Delta x^T
    \end{bmatrix}
\label{eq:envolve_u}
\end{equation}
where $\Delta x := x_j - x_{j-1}$.  Note that the above system need not be formed 
explicitly. We point out 
that \eqref{eq:envolve_u} is applied element-wise to the columns of $U_j$. This
means that the $k$-th component of $u_j$ can be derived by applying the operations
in \eqref{eq:envolve_u} to the $k$-th element of $\Delta x$ and the $k$-th row
of $U_j$, i.e., if $v\up{k}$ refers to the $k$-th component of a vector $v$, we have
\begin{equation}
    u_j^{(k)} = \frac{1}{s_{jj}}\Delta x^{(k)} - \sum_{i=j_m+1}^{j-1} \frac{s_{ij}}{s_{jj}} u_i^{(k)}.
\label{eq:ee1}
\end{equation} 
We now analyze how the accumulation of the errors from the computation of previous $u_i$'s 
affect the accuracy of the most recent $u_j$. For this we
denote the computed version of $u_i$ as $\tilde{u}_i = u_i + \varepsilon_i$, where $\varepsilon_i \in \mathbb{R}^n$ represents the error introduced during the computation of $u_i$.
We also denote  the rounding errors introduced during the computation of $u_j$ at step $j-1$ by
$\delta_j$ and we assume that
\begin{equation}
\label{eq:assumption}
\|\delta_j\|_\infty \leq C \cdot \|\Delta x\|_\infty/s_{jj} 
\end{equation}
where $C$ is a constant.

Then, the perturbed version of \eqref{eq:ee1} becomes:
\begin{equation}
    \tilde{u}_j^{(k)} = \frac{1}{s_{jj}}
    \Delta x^{(k)} - \sum_{i=j_m+1}^{j-1} \frac{s_{ij}}{s_{jj}} \tilde{u}_i^{(k)} + \delta_j^{(k)}.
\label{eq:ee2}
\end{equation}
We then substitute $\tilde{u}_j = u_j + \varepsilon_j$ and $\tilde{u}_i = u_i + \varepsilon_i$ into Equation \eqref{eq:ee2} and subtract Equation \eqref{eq:ee1}. This  leads to:
\begin{equation}
    \varepsilon_j^{(k)} = - \sum_{i=j_m+1}^{j-1} \frac{s_{ij}}{s_{jj}} \varepsilon_i^{(k)} + \delta_j^{(k)}.
\end{equation}
Therefore,
\begin{align}
  |\varepsilon_j^{(k)}| &\leq \sum_{i=j_m+1}^{j-1} \frac{|s_{ij}|}{s_{jj}} |\varepsilon_i^{(k)} | + |\delta_j^{(k)}|
  \nonumber \\
  &\leq \sum_{i=j_m+1}^{j-1} \frac{|s_{ij}|}{s_{jj}} \|\varepsilon_i\|_\infty + \frac{C}{s_{jj}} \|\Delta x\|_\infty .
  \label{eq:ineqstab0}                         
\end{align}
This leads us to define the following scalar sequence $w_j$ to monitor the behavior of the bound
\eqref{eq:ineqstab0}:
\begin{align}    w_j := \sum_{i=j_m+1}^{j-1} \frac{|s_{ij}|}{s_{jj}} w_i + \frac{C}{s_{jj}}\|\Delta x\|_\infty.
\label{eq:envolve_w}
\end{align} 
The sequence $w_j$ is just an upper bound for the infinity norm of the error vector $\varepsilon_j $ and it can be used to monitor the growth of the rounding errors. When $w_j$ exceeds a threshold $\eta>0$, we should discard all vectors in $U_j$ and $Q_j$ and {set $j_m \equiv j$}. The next iteration then computes $\Delta x$ and $\Delta f$ in Lines~4-5 of Algorithm~\ref{alg:TGS} using the latest pairs $x_{j+1}, x_j$ along with related $f_{j+1}, f_j$ and set $w_{j+1}= C\cdot\|\Delta x\|_\infty / s_{{j+1},{j+1}}$ to restart monitoring the growth of the rounding errors.
The auto-restart version of Algorithm \ref{alg:TGS} is briefly summarized in Algorithm
\ref{alg:restart}. In the  experiments section, we set
$C = 1$, unless otherwise specified.

\begin{algorithm}[H]
\centering
\caption{AATGS(m) with Restarting}\label{alg:restart}
\begin{algorithmic}[1]
    \State \textbf{Input}: Function $f(x)$, initial guess $x_0$, window size $m$, threshold $\eta$, constant $C$.
    \State Set $f_0 \equiv f(x_0)$, \quad  $x_1=x_0+\beta_0f_0$, \quad $f_1 \equiv f(x_1)$, \quad $w_0 \equiv 0$, \quad $j_m=0$
    \For{$j=1,2,\cdots,$ until convergence} 
               \State Update $j_m := \max\{j_m, j-m\}$ 
        \State Run lines 4-16 of Algorithm \ref{alg:TGS}
            \State $w_j := C\cdot\|\Delta x\|_\infty / s_{jj} + \sum_{i=j_m+1}^{j-1} (|s_{ij}|/s_{jj})w_i$
        \If{$w_j > \eta$}
            \State {Set $j_m \equiv j$} and $Q_j \equiv [~], \ U_j \equiv [~]$
            \EndIf
    \EndFor
\end{algorithmic}
\end{algorithm}

Here are some details and comments on Algorithm \ref{alg:restart}:
\begin{itemize} 
    \item  \textbf{Line 2:} 
      Same as the initialization step in Algorithm \ref{alg:TGS}.
      In the implementation, we can allocate a vector of length $m$ to store $w_j$'s.

    \item \textbf{Line 6:}
    Note that when $j-1<j_m$, i.e., in the first step after a restart, the sum in the expression is empty and therefore equal to zero. In this case both $Q_j$ and $U_j$ are empty. In this situation   
    $w_j := C \|\Delta x\|_\infty / s_{jj}$ reflecting the fact that there are no errors propagating from earlier steps.  
    
    \item \textbf{Lines 7-9:} When $w_j$ surpasses the given threshold $\eta$,
    a restart is necessary because the stability is compromised. For a restart,
    we set $j_m \equiv j$ and discard all stored vectors in $Q_j$ and
    $U_j$. We only retain the last two iterates, $x_j$ and $x_{j+1}$ as well as $f_j$ and $f_{j+1}$ to continue the process
    when we compute $\Delta x$ and $\Delta f$ in the next iteration. 
    Algorithm \ref{alg:restart} will generate mathematically the same iterates as Algorithm \ref{alg:TGS} if this condition is not met.
\end{itemize}

\section{Experiments} \label{sec:exp}
This section presents a few experiments on nonlinear problems to compare AATGS with the standard AA.
We also include the results of the fixed point iteration in our experiments.
Since it is common practice to add a fixed restart for AA (i.e., clearing $\mathcal{X}_j$ and $\mathcal{F}_j$ every fixed number of iterations), we incorporate a fixed restart for both AATGS (in addition to the auto-restart strategy discussed in Section~\ref{sec:restarting}) and AA.
For AA, to restart after obtaining $x_{j+1}$, we discard all vectors in $\mathcal{X}_j$ and $\mathcal{F}_j$.
The next iteration then begins by computing $\Delta x_j$ and $\Delta f_j$ using $x_{j+1}$ and $x_j$ along with related $f_{j+1}$ and $f_j$.
In the figures in this section, we use the notation AATGS[$m,d$] and AA[$m,d$] to represent AATGS and AA with window size $m$ and fixed restart dimension $d$.
When $d$ is replaced with a `$-$', fixed restart is disabled.
Unless otherwise noted, we set the threshold parameter $\eta$ in Algorithm \ref{alg:restart} to $10^3$.
Note that standard AA performance is highly dependent on window size $m$ and fixed restart dimension $d$. 
While we present results for only a few AA parameters, we employ a grid search to select the best-performing AA configurations.
In our tests, the max number of iterations and stopping tolerance for the relative norm of $f(x)$ varies based on problem size, convergence rate, and the initial norm of $f(x)$.

Our results demonstrate that AATGS achieves performance comparable to the standard AA method with equivalent window sizes when applied to highly non-symmetric and nonlinear problems.
Furthermore, because of the short-term recurrence incorporated in AATGS, it outperforms AA on problems that are close to symmetric linear, even with a much smaller window size.
These experiments illustrate the properties of AATGS shown in the previous sections.
We also demonstrate the effectiveness of the restarting strategy.
Although it is possible to carefully tune the parameters and generate competitive results using standard AA, the proposed auto-restart AATGS has the advantage of not requiring the selection of the restart dimension. 

All of the methods were implemented in MATLAB 2023a.
We implemented AA by solving the least-square problem shown in Equation~\ref{eq:thetaj} using the pseudo-inverse with MATLAB's \texttt{pinv} function.
All experiments were conducted on the $\tt{Agate}$ cluster at the Minnesota Supercomputing Institute. 
The computing node features 64 GB of memory and is equipped with two sockets, each having a 2.45GHz AMD EPYC 7763 64-Core Processor.

\subsection{Bratu Problem}
In our first experiment, we solve a problem with a low degree of non-linearity  to demonstrate the benefits of the short-term recurrence in AATGS.
We consider a finite difference discretization of the following \textit{modified Bratu problem}~\cite{hajipour2018accurate} with Dirichlet boundary condition:
\begin{eqnarray}
    \Delta u + \alpha  u_x + \lambda e^u &=& 0 \ \ \text{in} \ \Omega, \nonumber \\
    u & = & 0 \ \ \text{on} \ \partial \Omega,
\end{eqnarray}
where $\Omega = [0,1]^2$.
We use \textit{centered finite differences}~\cite{chaudhry2008open,folland1995introduction,wilmott1995mathematics} to discretize the equation on a $202\times 202$ grid (including boundary points).
For our boundary value problem, this discretization results in a system of nonlinear equations with $n=200\times 200=40,000$ unknowns of the form:
\begin{equation}
    f(v) = Av + h \cdot \alpha Bv + h^2 \cdot \lambda \exp(v) = 0,
\end{equation}
where $v\in\mathbb{R}^n$ is the numerical solution at $n$ interior grid points, $h=1/201$ is the mesh size, $A\in\mathbb{R}^{n\times n}$ is a symmetric matrix, and $B\in\mathbb{R}^{n\times n}$ is a skew-symmetric matrix.
The fixed point iteration takes the form:
\begin{equation}
    g(v) = v + \beta f(v) = v + \beta(A v + h \cdot \alpha Bv + h^2 \cdot \lambda \exp(v)).
\end{equation}
The parameter $\lambda$ in the equation influences the change rate in the Jacobian of the problem.
Denoting by $J(v)$ the Jacobian at $v$, we have
\begin{equation}
    \|J(v_{j+1}) - J(v_j)\|_{\max} \leq h^2 \cdot \lambda\|\exp(v_{j+1}) - \exp(v_j)\|_{\infty},
\end{equation}
where $\|\cdot\|_{\max}$ is the matrix max norm.
This indicates that a larger $\lambda$ can potentially increase the non-linearity of the problem.
We take $\lambda=1$ in all of our experiments so that the equation is physically meaningful. 
In this case, the Jacobian's variation is limited, resulting in an almost linear problem.
The parameter $\alpha$ controls the degree of symmetry of the problem.
We test both symmetric ($\alpha = 0$) and non-symmetric ($\alpha \ne 0$) cases.

In all our experiments on Bratu problem, we use the zero vector as the initial solution and set the parameter $\beta=1.0$ for both AATGS and AA.
For comparison, we also include the results of fixed point iteration with $\beta=0.1$.

We begin our experiments with the symmetric case where $\alpha=0$.
To highlight the benefits of AATGS's short-term recurrence, we compare AATGS(3) with AA(20) and AA(100), and disable the restart for both AATGS and AA.
The left panel of Figure~\ref{fig:bratu} plots the iteration number versus the residual norm $\Vert f(v)\Vert_2$.
We can observe from the figure that AATGS performs better than AA in this experiment, even with a much smaller window size.
This is because when $\lambda=1$, the problem is close to a symmetric linear problem. 
In this case, AATGS(3) behaves similarly to AATGS($\infty$), which is equivalent to AA($\infty$).
This explains its superior performance compared to AA(20) and AA(100), demonstrating the potential advantage of AATGS over AA in handling nearly linear and symmetric problems. 

Next, we set $\alpha$ to 20 and solve a non-symmetric problem, noting that it remains nearly linear.
We first study the performance of AATGS with different restart strategies: no restart, fixed restart with dimension $50$, and auto-restart with $\eta=10^{3}$.
Since the problem is no longer symmetric, we slightly increase the window size to $m=5$.
We can observe from the middle panel of Figure~\ref{fig:bratu} that the AATGS(5) without restart underperforms the other two options.
The two restart versions have similar performance and the auto-restart is slightly better in this experiment.
This shows the importance of restart strategies.
As restart strategies can be very useful, in the following tests, we enable restart strategies for both AATGS and AA.

Finally, we compare the performance of AATGS and AA for the same non-symmetric problem with $\alpha=20$.
We compare AATGS(5) with auto-restart ($\eta=10^{3}$) against AA(5) and AA(20), both with a fixed restart dimension of $50$.
The results, shown in the right panel of Figure~\ref{fig:bratu}, demonstrate that AATGS(5) outperforms AA(5) and shows results comparable to those of AA(20).
This indicates that AATGS constructs a more effective subspace than standard AA even when the Jacobian is not symmetric.

\begin{figure}[tb]
     \centering
     \begin{subfigure}[b]{0.32\textwidth}
         \centering
         \includegraphics[width=\textwidth]{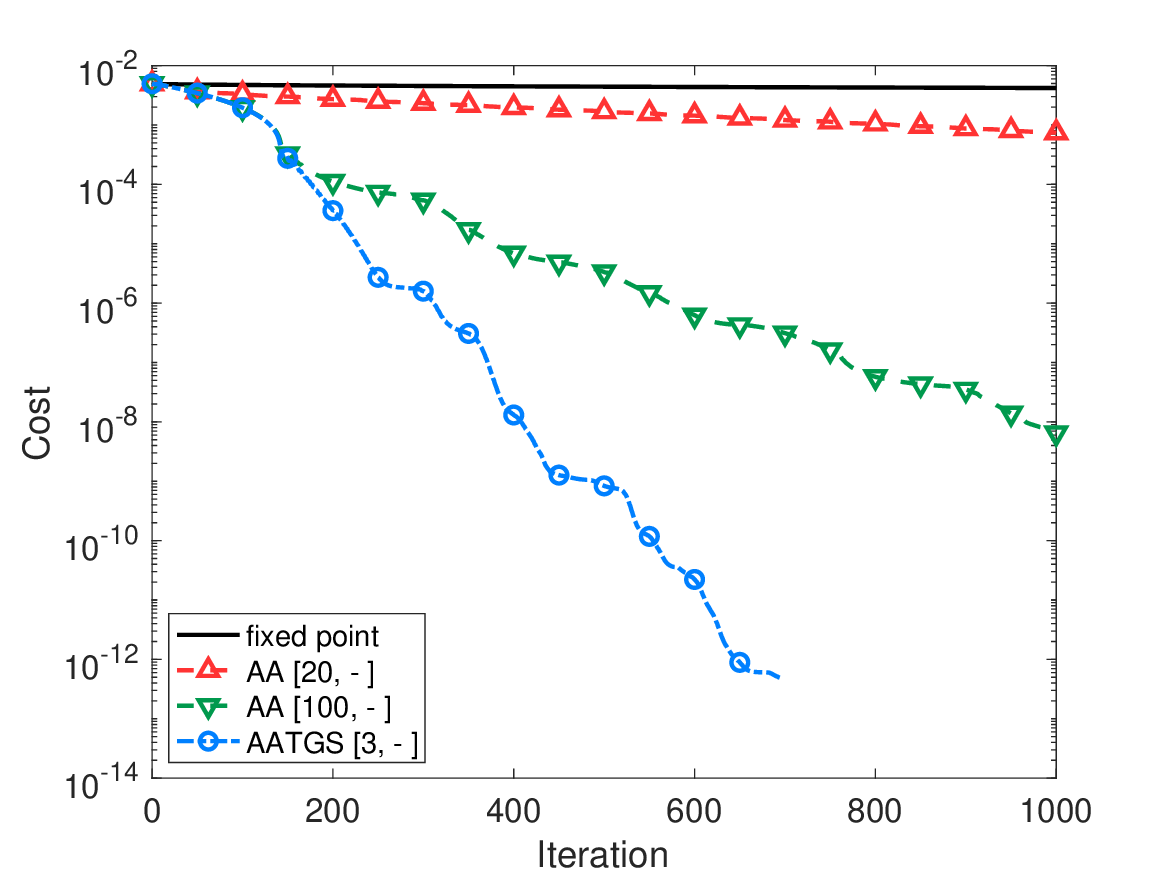}
         \label{fig:bratu_1}
     \end{subfigure}
     \hfill
     \begin{subfigure}[b]{0.32\textwidth}
         \centering
         \includegraphics[width=\textwidth]{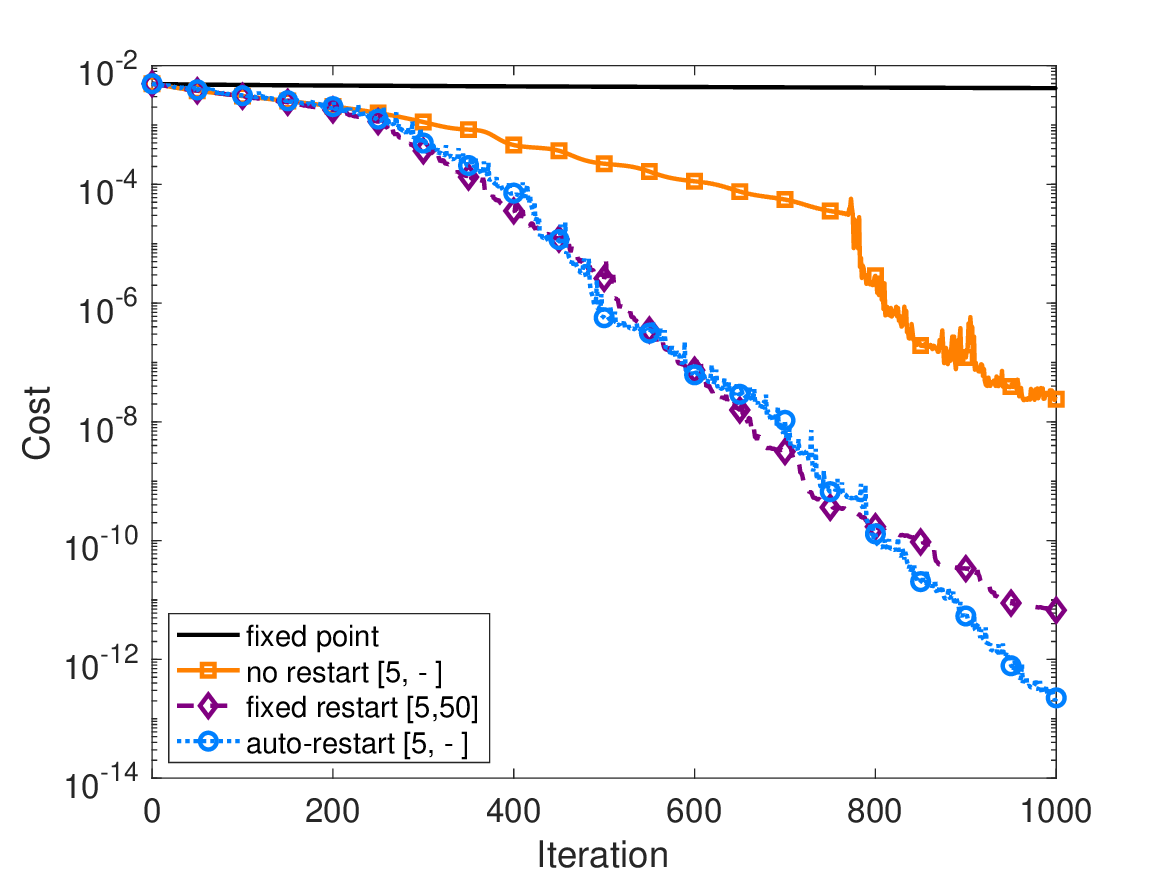}
         \label{fig:bratu_2}
     \end{subfigure}
     \hfill
     \begin{subfigure}[b]{0.32\textwidth}
         \centering
         \includegraphics[width=\textwidth]{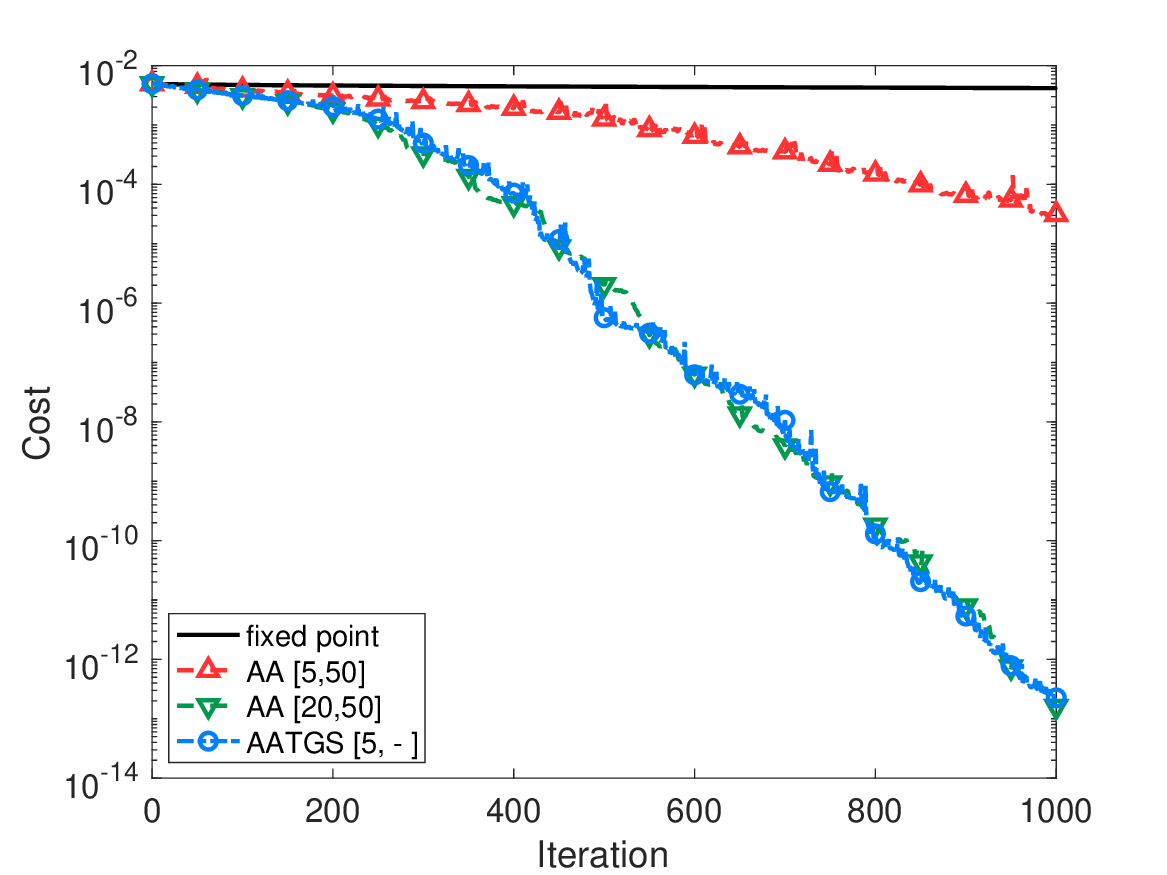}
         \label{fig:bratu_3}
     \end{subfigure}
        \caption{Bratu problem with initial solution $v_0 = 0$ and $\lambda=1$. (left) AATGS and AA with no restart for symmetric Jacobian with $\alpha=0$; (middle) AATGS with no restart, a fixed restart, and auto-restart for the non-symmetric Jacobian case. (right) AATGS with auto-restart and AA with a fixed restart for non-symmetric Jacobian with $\alpha=20$. $x$-axis is the iteration number and $y$-axis is the residual norm $\|f(v)\|_2$. Here, $[\cdot, \cdot]$ indicates the window size and the restart dimension of each method.}
        \label{fig:bratu}
\end{figure}

\subsection{Chandrasekhar's H-equation}
Next, we evaluate our method for Chandrasekhar's H-equation~\cite{ctk1995iter}. 
A form of the equation can be written as:
\begin{equation}
    {H}(\mu) - \left(
    1 - \frac{\omega}{2}\int_0^1\frac{\mu{H}(\nu)}{\mu + \nu} d\nu
    \right)^{-1} = 0,
    \label{eq:hequation}
\end{equation}
where $\omega \in [0,1]$ is a parameter, and we seek a solution ${H}\in C[0,1]$.
We discretize \eqref{eq:hequation} on a uniform grid and obtain the following discretized problem \cite{ctk1995iter}:
\begin{equation}
    [f(h)]_i := h_i - \left(1 - \frac{\omega}{2n}\sum_{j=1}^n \frac{\mu_i h_j}{\mu_i + \mu_j}\right)^{-1}
\end{equation}
where $h \in \mathbb{R}^n$ is the numerical solution at $n$ grid points, $\mu_i = \frac{i - 0.5}{n}$ for $1\leq i \leq n$, and the component-wise expression of the corresponding fixed point iteration $h=g(h)$ is given by
\begin{equation}
    [g(h)]_i = h_i + \beta [f(h)]_i = h_i + \beta\left[h_i - \left(1 - \frac{\omega}{2n}\sum_{j=1}^n \frac{\mu_i h_j}{\mu_i + \mu_j}\right)^{-1}\right].
\end{equation}
It is known that the Jacobian in this problem is non-symmetric~\cite{lin2021explicit}, as indicated by
its expression: 
\begin{equation}
    [J(h)]_{ik} = \delta_{ik} - \frac{\omega}{2n} \cdot \frac{\mu_i}{\mu_i + \mu_k} \cdot \left(1 - \frac{\omega}{2n}\sum_{j=1}^n \frac{\mu_i h_j}{\mu_i + \mu_j}\right)^{-2},
\end{equation}
where $\delta_{ik} = 1$ if $i = k$ and 0 otherwise.
The choice of $\omega$ can have an impact on the convergence of solution algorithms~\cite{wei2023convergence}. 
In our experiments, we set $n = 1,000$ and consider cases with $\omega = 0.99$ and
$\omega = 1.0$, both of which require careful timing for restarts in AA and
AATGS.

In this group of experiments, we use the vector of all ones as the initial solution and again set the parameter $\beta=1.0$ for both AATGS and AA.
Since the problem size is much smaller, we apply a smaller fixed restart dimension of $20$ for AA.
We compare AATGS and AA with window sizes $m=5$ and $m=20$ and again include results for fixed point iteration with $\beta=0.1$.
In this problem, a larger $m$ does not necessarily yield faster convergence, as observed from Figure \ref{fig:heq} that AA(5) consistently outperforms AA(20). Furthermore, we can see that AA(20) stagnates before a restart is triggered at step 20, which demonstrates the usefulness of the restarting procedure in this problem. With auto-restart, AATGS makes a stable selection of the
 window size, as shown by the identical performance of AATGS(5) and AATGS(20) in both figures.

 It is worth noting that a larger $\omega$ leads to a more challenging
 problem. When $\omega = 0.5$, the trajectories of AA(5), AA(20), AATGS(5), and
 AATGS(20) all overlap. However, when $\omega$ increases to 0.99, AA(20) fails
 to catch up with the other methods. When $\omega = 1.0$, AATGS outperforms AA.  This
 enhanced robustness of AATGS in dealing with numerical stability issues in the sequence of $x_j$'s 
 can also be attributed to the auto-restart strategy.

\begin{figure}[tb]
     \centering
     \begin{subfigure}[b]{0.45\textwidth}
         \centering
         \includegraphics[width=\textwidth]{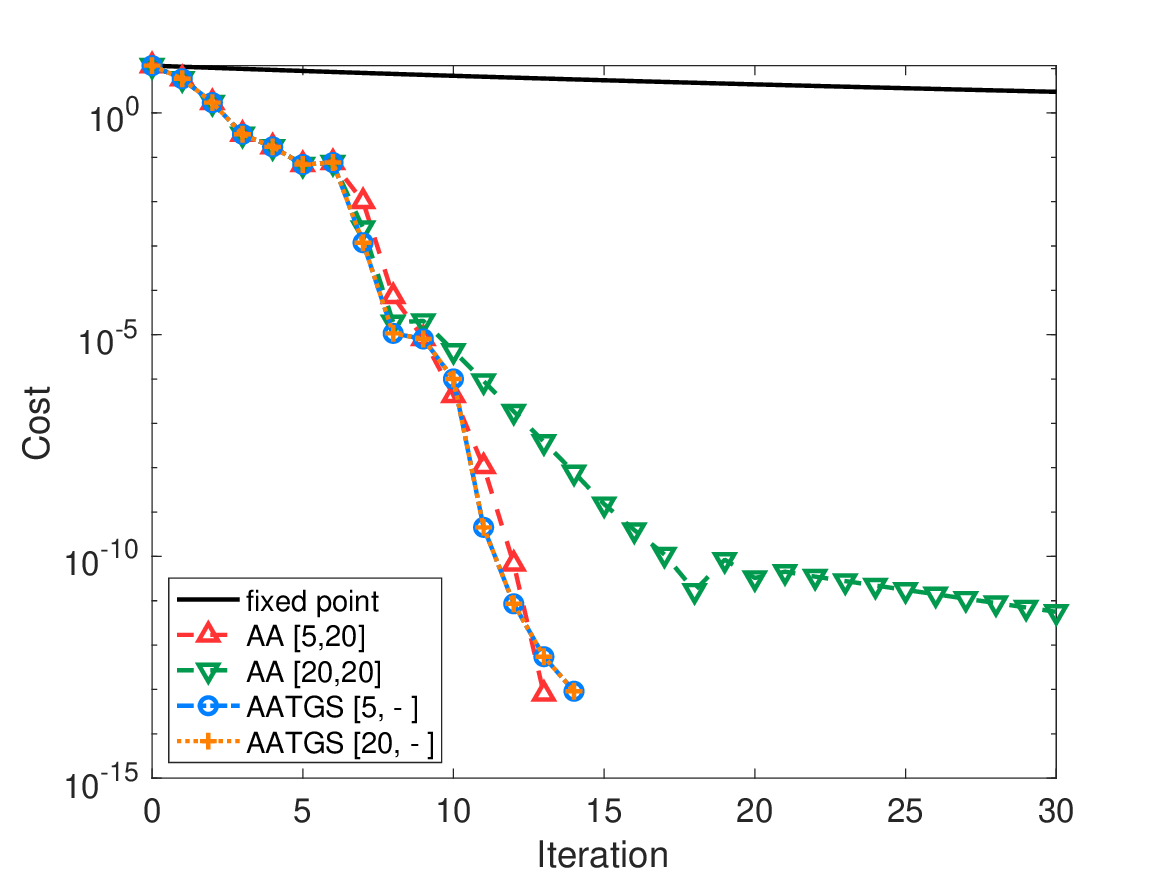}
         \label{fig:heq_1}
     \end{subfigure}
     \hfill
     \begin{subfigure}[b]{0.45\textwidth}
         \centering
         \includegraphics[width=\textwidth]{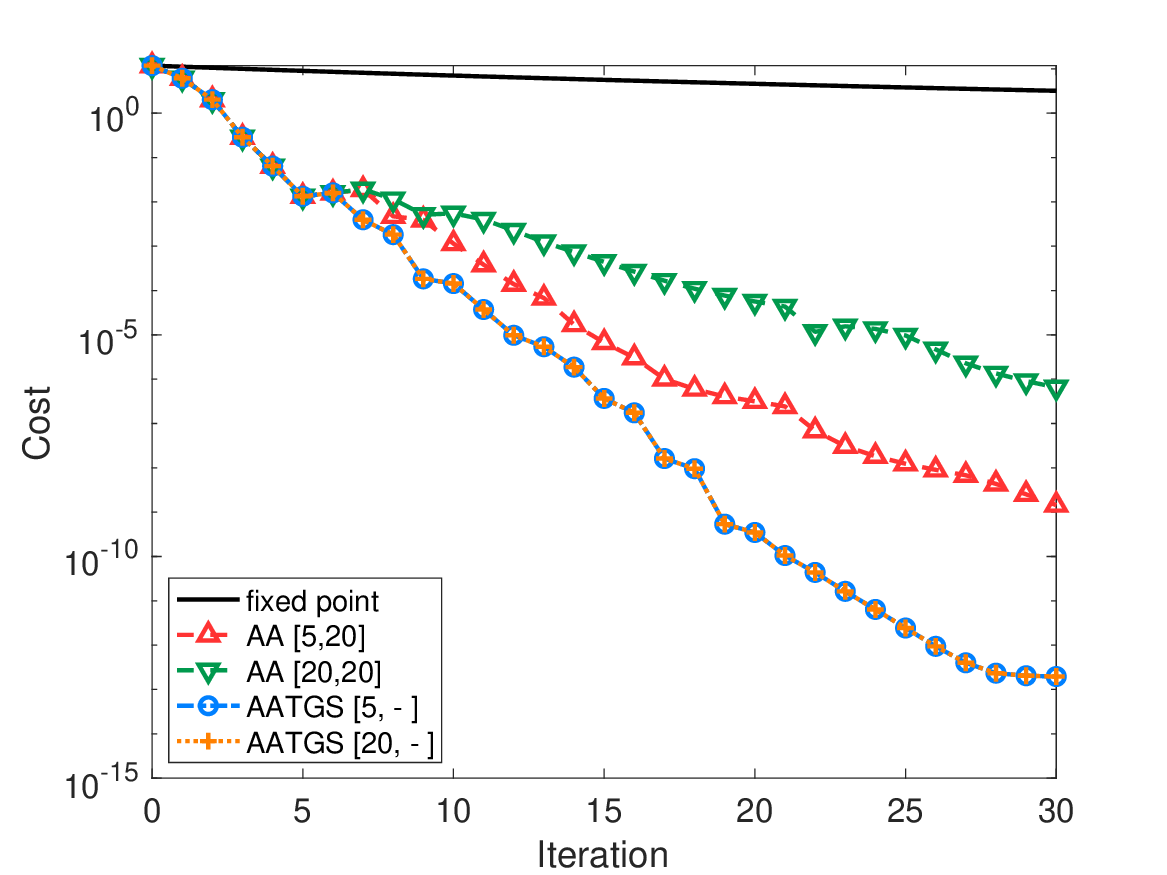}
         \label{fig:heq_2}
     \end{subfigure}
        \caption{Chandrasekhar's H-equation with dimension $n=1,000$. (left) The simpler case with $\omega=0.99$; (right) The harder case with $\omega=1.0$. $x$-axis is the iteration number and $y$-axis is the residual norm $\|f(h)\|_2$. Here, $[\cdot, \cdot]$ indicates the window size and the restart dimension of each method.}
        \label{fig:heq}
\end{figure}

\subsection{Lennard-Jones problem}

Next, we evaluate the performance of AATGS when solving the unconstrained minimization problem of the form
\begin{equation}
	\min_x \phi(x).
\end{equation}
We define $f(x)=-\nabla\phi(x)$, and write the fixed point iteration in the gradient descent form
\begin{equation}
    g(x) = x + \beta f(x) = x + \beta (-\nabla\phi(x)).
\end{equation}
Specifically, we optimize the geometry of molecules to achieve a minimum
total Lennard-Jones (LJ) potential energy. The LJ potential is defined as
follows\footnote{Thanks: We benefited from Stefan Goedecker's course site at
  Basel University.}:
\begin{equation}
    E(Y) = \sum_{i=1}^{N} \sum_{j=1}^{i-1} 4\epsilon
    \left[\left(\frac{\delta}{\|Y_{i,:} - Y_{j,:}\|}\right)^{12} - \left(\frac{\delta}{\|Y_{i,:} - Y_{j,:}\|}\right)^{6} \right].
\end{equation}
In this formulation, $N$ is the number of atoms, $\epsilon$ represents the well depth, $\delta$ is the distance between two non-bonding particles, and $Y\in\mathbb{R}^{N\times 3}$ with its $i$-th row $Y_{i,:}$ representing the coordinates of atom $i$.
We reformulate the problem by reshaping $Y$ into $x\in\mathbb{R}^{3N}$ where $[x_{3i-2},x_{3i-1},x_{3i}]=Y_{i,:}$ and defining the loss function $\phi(x) = E(Y)$.
In our experiments, we set both $\epsilon$ and $\delta$ to $1$ and simulate an Argon cluster starting from a perturbed initial Face-Centered-Cubic (FCC) structure \cite{meyer1964new}. 
We took 3 cells per direction, resulting in 27 unit cells. 
Given that each FCC cell includes 4 atoms, there are $N=108$ atoms in total. 
The challenge in this problem arises from the high exponents in the potential.

Figure \ref{fig:LJ} (left) shows an illustration of the geometry optimization in this problem, where the initial positions of the atoms are shown as blue dots, and the red triangles indicate the optimized final positions, which represent a local minimum around the initial positions rather than a global optimum.
We take $\beta=1.5\times10^{-4}$ in our experiments for both AATGS and AA.
Given that this is an unconstrained optimization problem, the Jacobian of
$\nabla \phi(x)$ is also the Hessian of $\phi(x)$, which is always symmetric. Therefore, we set the window size of AATGS to $m=3$. 
In Figure \ref{fig:LJ} (right), we compare AATGS against standard AA in three configurations. 
We can see that AATGS with a window size of $m=3$ and auto-restart strategy outperforms others. 
AA with $m=20$ and a restart dimension of 100 performs similarly to AA with $m=3$ and restart 10, and both surpass AA with $m=3$ and a restart dimension of 100. 
It again demonstrates the usefulness of the auto-restart strategy in AATGS for a non-trivial optimization problem.

\begin{figure}[tb]
     \centering
     \begin{subfigure}[b]{0.45\textwidth}
         \centering
         \includegraphics[width=\textwidth]{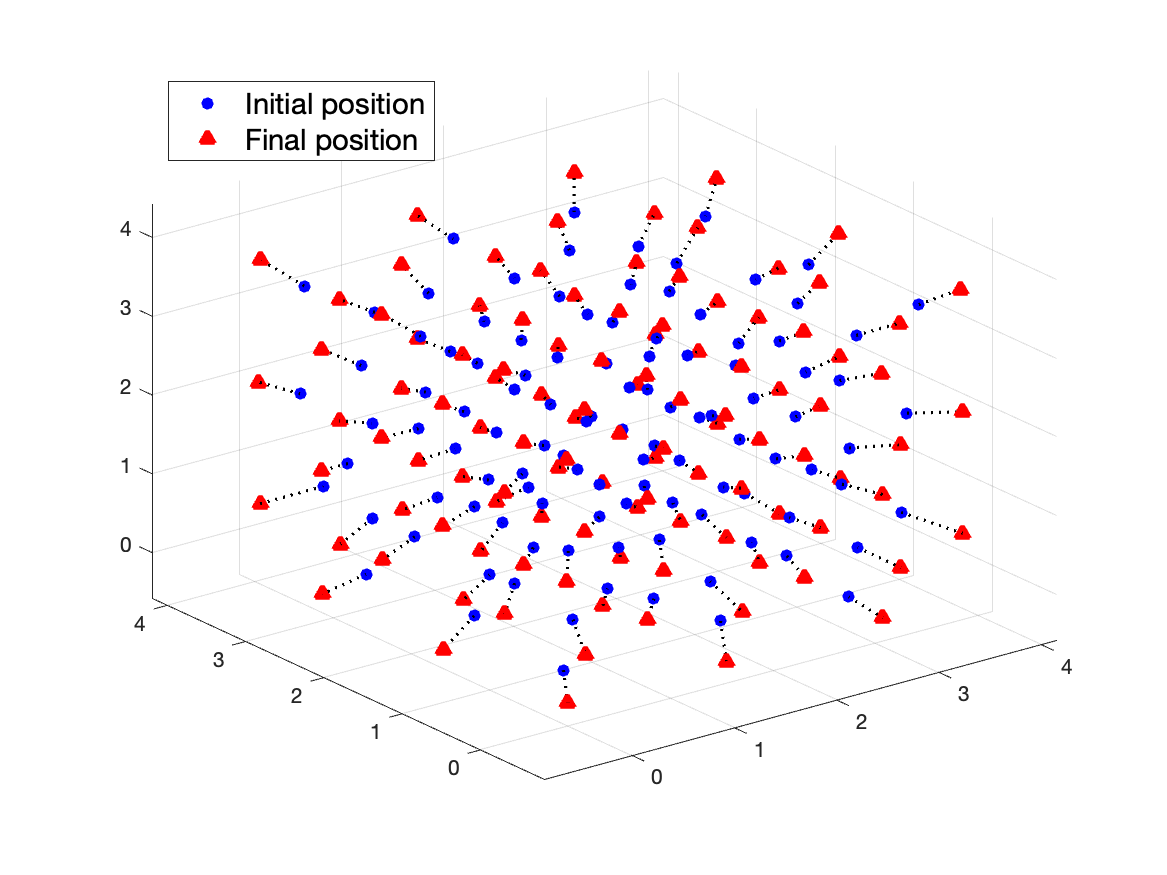}
         \label{fig:LJ_1}
     \end{subfigure}
     \hfill
     \begin{subfigure}[b]{0.45\textwidth}
         \centering
         \includegraphics[width=\textwidth]{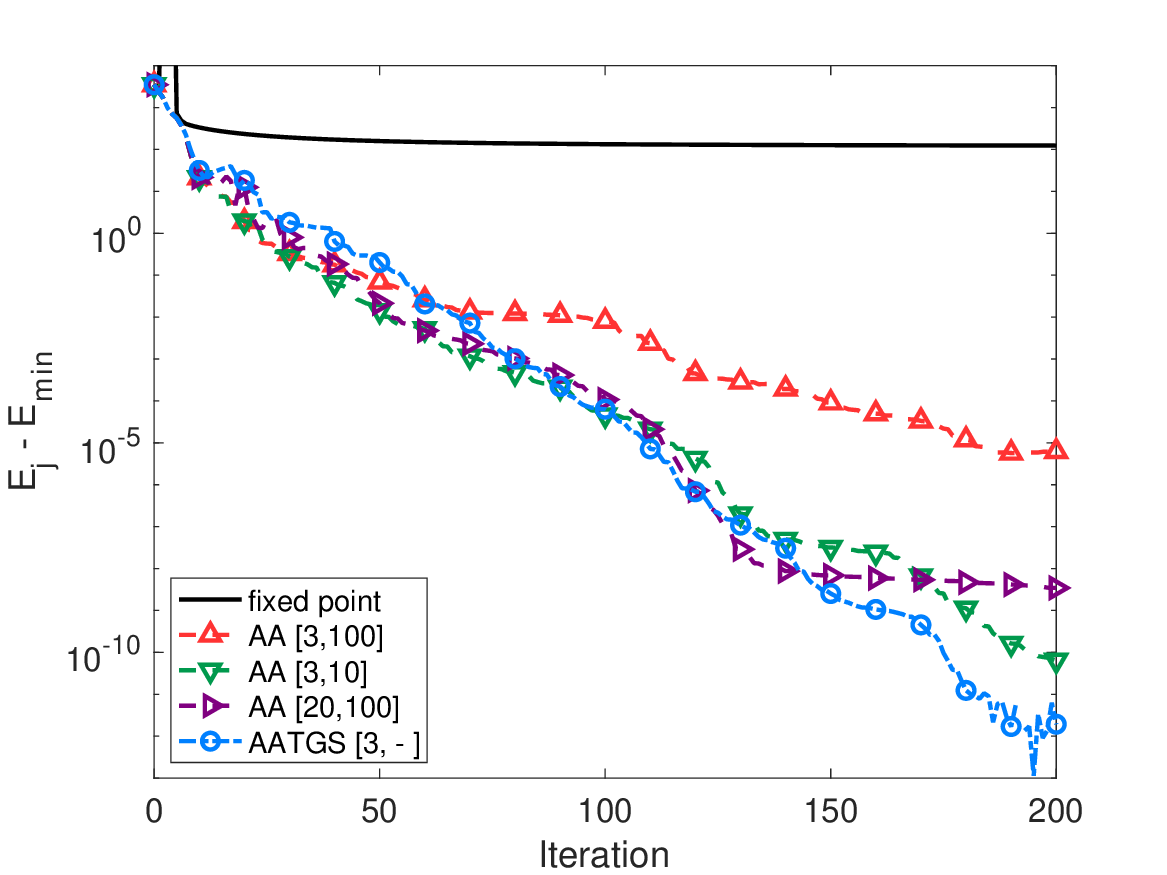}
         \label{fig:LJ_2}
     \end{subfigure}
        \caption{The Lennard-Jones problem. (left) The geometry of particles at the initial state and the final state; (right) The results of various methods in this experiment. $x$-axis is the iteration number and $y$-axis is the shifted energy $E_j - E_{\min}$. Note that, $E_{\min}$ is the minimum energy achieved by all considered methods so that the shifted energy is always positive. $[\cdot, \cdot]$ indicates the window size and the restart dimension of each method.}
        \label{fig:LJ}
\end{figure}

\subsection{Steady Navier–Stokes equations}
In our next experiment, we aim to solve a 2D lid-driven cavity problem described
by the steady Navier-Stokes equations (NSEs):
\begin{equation}
\begin{aligned}
    u\cdot\nabla u + \nabla p - 
    Re^{-1}\Delta u &= f, \\ 
    \quad \nabla\cdot u &= 0,
\end{aligned}
\end{equation}
with the domain $\Omega=(0,1)^2$ and the Dirichlet boundary condition
$(u,p) = (0,0)$ on the sides and bottom and $(1,0)$ on the lid.  Following the
settings in \cite{pollock2023filtering}, we set the Reynolds number $Re=10,000$
and use an initial guess of all zeros \footnote{Thanks: We would like to thank Sara Pollack and Leo G. Rebholz for sharing their 2D Steady Navier-Stokes equation codes with us.}. The discretization results in a problem
of size 190,643. Readers can refer to \cite{pollock2023filtering} for details of
the mesh. The fixed point iteration used by both AATGS and AA takes the form:
\begin{equation}
    g(v) = v + \beta f(v) = v + \beta (h(v) - v),
\end{equation}
where $v$ is the discretization of $(u,p)$ on grid points, and $h(v)$ performs one step of Picard iteration which maps $v$ to some specific approximate solution. Details on $h(v)$ can be found in \cite{doi:10.1137/18M1206151}.

The results in Figure \ref{fig:NS} compare Picard iterations, AA with window sizes $m=5$ and $m=10$, and AATGS with a window size of $m=5$. A restart is not necessary in this experiment since we can observe that both AA and AATGS converge without stagnation.
We also observe that Picard iteration fails to converge, which is likely due to the
extremely large Reynolds number. Both AATGS and AA manage to converge at a
similar rate. Given the non-symmetric and nonlinear nature of this problem, we cannot
expect significant gains from AATGS over AA in this case. Indeed, the methods behave similarly.

\begin{figure}[tb]
     \centering
     \begin{subfigure}[b]{0.45\linewidth}
         \centering
         \includegraphics[width=\textwidth]{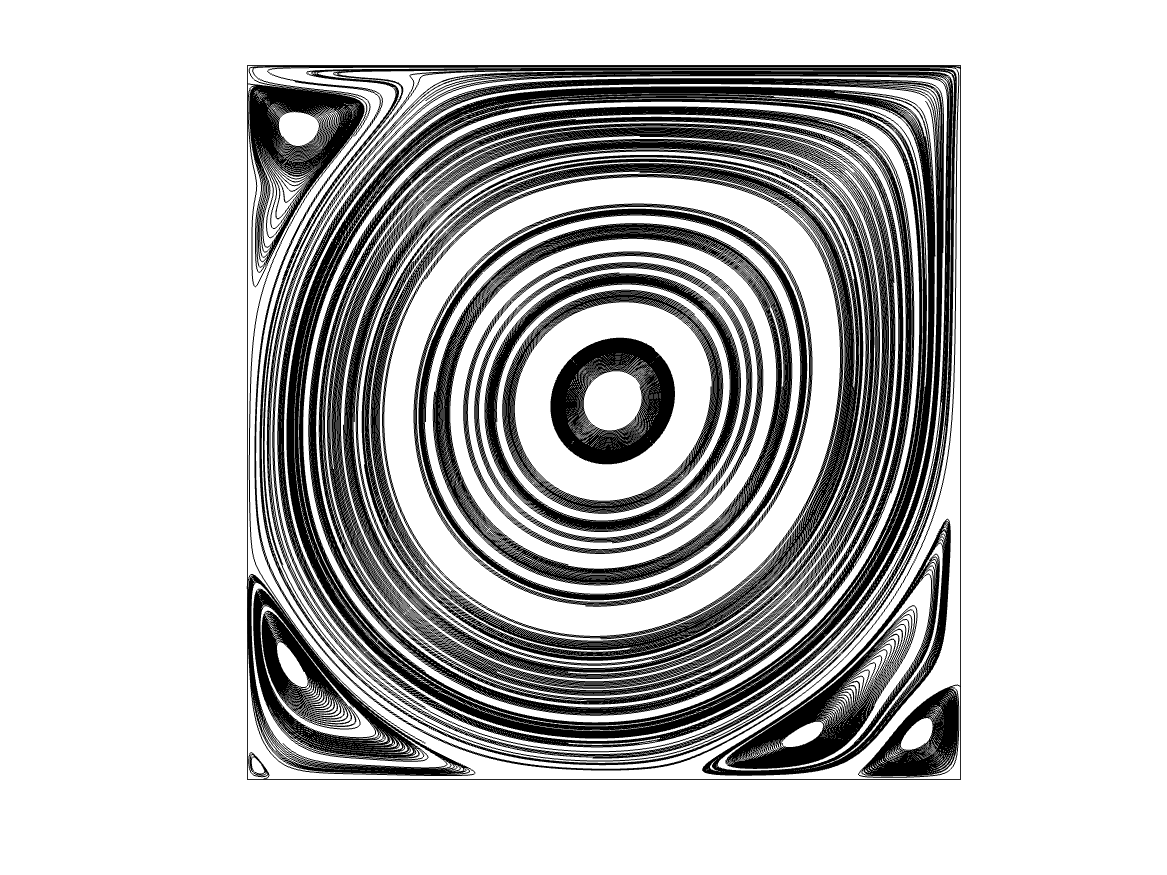}
         \label{fig:NS_1}
     \end{subfigure}
     \hfill
     \begin{subfigure}[b]{0.45\linewidth}
         \centering
         \includegraphics[width=\textwidth]{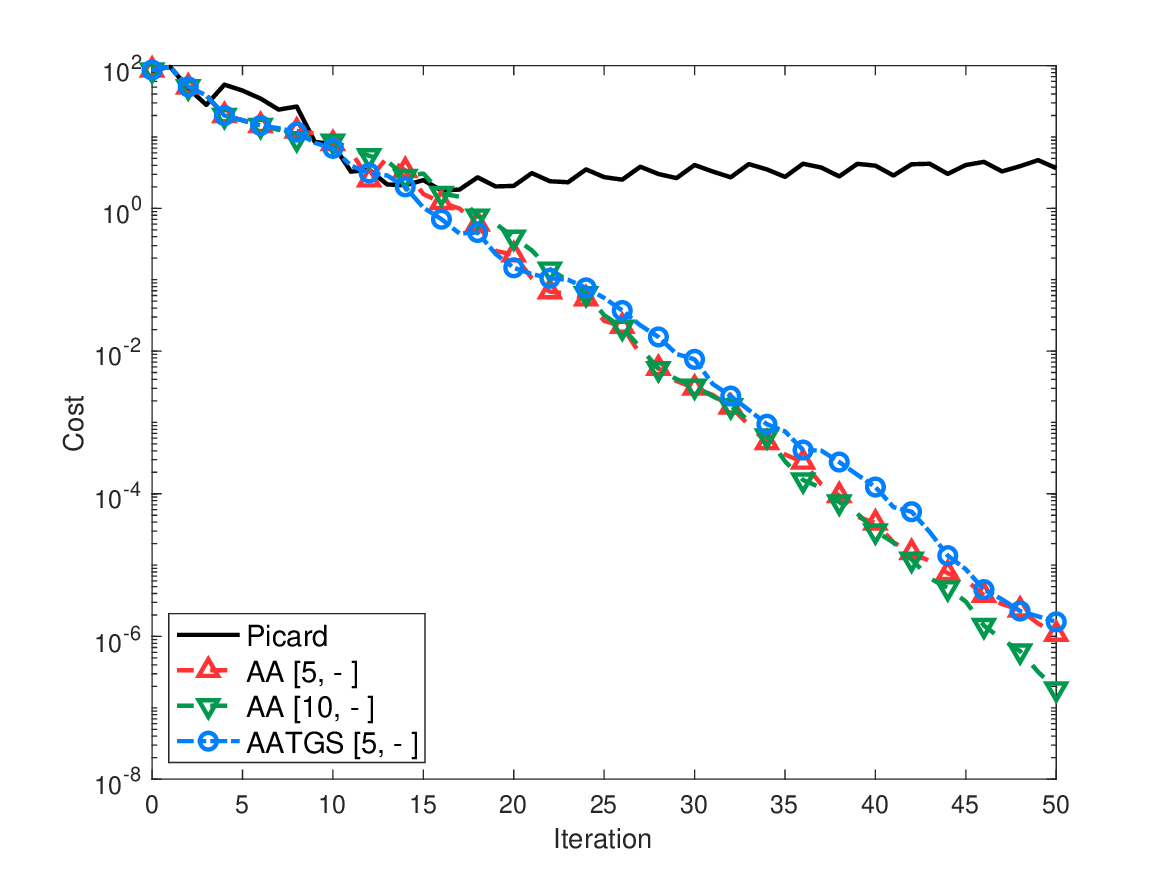}
         \label{fig:NS_2}
     \end{subfigure}
        \caption{2D Steady Navier-Stokes equations with the Reynolds number $Re=10,000$. (left) The streamlines of the solution given by AATGS at step 50; (right) The results of various methods in this experiment. $x$-axis is the iteration number and $y$-axis is the residual norm of $\|\text{Picard}(v) - v\|_2$. $[\cdot, \cdot]$ indicates the window size and the restart dimension of each method.}
        \label{fig:NS}
\end{figure}

\subsection{Regularized Logistic Regression}
Regularized logistic regression is a powerful tool for binary classification tasks, particularly when dealing with datasets that have a large number of features. In this experiment, we investigate the application of regularized logistic regression to the Madelon dataset \footnote{\url{https://archive.ics.uci.edu/dataset/171/madelon}}. The training set consists of $N=2,000$ samples and $n=500$ features. The objective can be formulated as follows:
\begin{equation}
    \min_\theta ~ \frac{1}{N} \sum_{i=1}^{N} \log(1 + \exp(- y_i \cdot x_i^\top\theta)) + \frac{\lambda}{2}\|\theta\|_2^2,
\end{equation}
where $x_i$ represents the feature vector of the $i$-th sample (each feature is normalized to have a mean of 0 and a standard deviation of 1 across all samples), $y_i$ represents the label of the $i$-th sample (either -1 or 1 for binary classification), $\theta \in \mathbb{R}^n$ is the parameter vector to be optimized, $\lambda$ is the regularization parameter that controls the balance between fitting the training data well and preventing overfitting by penalizing large parameter values. 

Figure \ref{fig:reglog} illustrates the shifted training loss as a function of the iteration number. We set the fixed point iteration parameter $\beta = 1.0$, the regularization parameter $\lambda = 0.01$, and the window size $m=3$. We use the zero vector as the initial solution. In this comparison, we focus on AATGS with varying auto-restart threshold $\eta$ ranging from $10^1$ to $\infty$. The results demonstrate the efficacy and simplicity of parameter tuning for our auto-restart strategy, as the loss curves for $\eta = 10^2$ to $10^6$ show small variance. The performance deteriorates only when $\eta = 10^1$ -- resulting in excessive, redundant restarts -- and when $\eta = \infty$ -- leads to the absence of restarts. Through our testing across many experiments, the default setting of $\eta = 10^3$ often delivers a sufficiently accurate solution.

\begin{figure}[tb]
         \centering
         \includegraphics[width=0.5\linewidth]{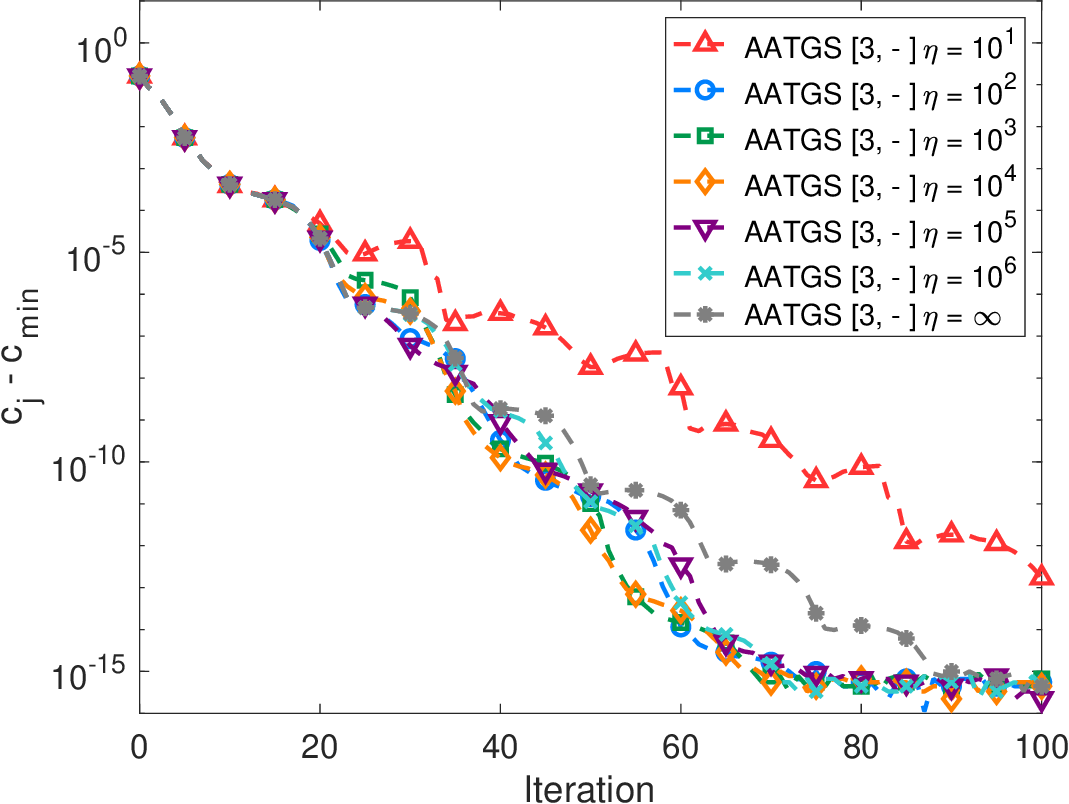}
        \caption{The results of various $\eta$'s for the regularized logistic regression on the Madelon dataset. $x$-axis is the iteration number and $y$-axis is the shifted training loss $c_j - c_{\min}$. Note that, $c_{\min}$ is the minimum training loss achieved by all considered methods so that the shifted loss is always positive. $[\cdot, \cdot]$ indicates the window size and the restart dimension of each method.}
        \label{fig:reglog}
\end{figure}

In Table \ref{tab:eta}, we present the number of iterations (up to 1000) required for AATGS to achieve a relative loss smaller than $10^{-12}$. The regularization parameter $\lambda$ varies from $10^{0}$ to $10^{-5}$, changing the optimization problems from relatively simple to significantly difficult to solve. It is important to note that our goal in these comparisons is not to achieve the highest accuracy but rather to elucidate the characteristics of AATGS. With a window size of $m=3$, it is observed that as the problem becomes more challenging (with smaller $\lambda$), the number of required iterations generally increases. However, AATGS with $\eta = 10^3$ to $10^5$ always exhibits similar performance. Only extremely high or low $\eta$'s tend to be significantly slower than other values and fail to converge within 1000 iterations. This further confirms that $\eta$ offers a broad selection range. 

\begin{table}[htb]
\centering
\begin{tabular}{c|cccccc}
    \toprule
    \multirow{2}{*}{$\lambda$} & \multicolumn{6}{c}{Number of Iterations}  \\
    & $\eta=10^{1}$ & $\eta=10^{2}$ & $\eta=10^3$ & $\eta=10^4$ & $\eta=10^5$ & $\eta=\infty$ \\
    \midrule
    $10^{0}$ & 21 & 20 & 22 & 22 & 22 & 22 \\
    $10^{-1}$ & 52 & 50 & 48 & 51 & 51 & 56 \\
    $10^{-2}$ & 200 & 113 & 105 & 117 & 113 & 167 \\
    $10^{-3}$ & F & F & 188 & 173 & 201 & 418 \\
    $10^{-4}$ & F & 473 & 251 & 209 & 228 & F \\
    $10^{-5}$ & F & F & 254 & 228 & 251 & F \\
    \bottomrule
\end{tabular}
\caption{A comparison of AATGS with a fixed window size $m=3$ across various auto-restart thresholds $\eta$ (columns) and regularization parameters $\lambda$ (rows) is presented. This table displays the number of iterations required for AATGS to achieve a relative loss smaller than $10^{-12}$. The notation `F' indicates cases where the method fails to converge within 1000 iterations.}
\label{tab:eta}
\end{table}

\subsection{Minimax Optimization}\label{sec:minimax}

Bilinear games are often regarded as an important example of understanding new
algorithms and techniques for solving general minimax problems
\cite{DBLP:conf/iclr/GidelBVVL19, gdaam}.  In this experiment, we study the
following zero-sum bilinear games:
\begin{equation}
\label{eq:bilinear}
    \min_{{x}\in \mathbb{R}^n}\max_{{y}\in \mathbb{R}^n}\phi({x},{y}) =  {x}^{T}{A}{y} + {b}^{T}{x} + {c}^T{y},
\end{equation}
where $A$ is a full-rank matrix.
The Nash equilibrium to the above problem is given by $({x}^{\ast},{y}^{\ast})=(-{A}^{-T}{c},-{A}^{-1}{b})$. 
We use the alternating Gradient Descent Ascent (GDA) algorithm to solve the problem in the following form:
{
\begin{equation}
\begin{aligned}
    \begin{bmatrix}
        {x}_{j+1} \\ {y}_{j+1}
    \end{bmatrix}
    &= 
    \begin{bmatrix}
        {x}_j \\ {y}_j
    \end{bmatrix}
    +
    \beta\cdot
    \begin{bmatrix}
        -\nabla_x \phi({x}_j,{y}_j) \\
        \nabla_y \phi({x}_{j+1},{y}_j)
    \end{bmatrix} \\
    &=
    \begin{bmatrix}
        {I} & -\beta {A} \\
        \beta {A}^T & {I} - \beta^2 {A}^T {A}
    \end{bmatrix}
    \begin{bmatrix}
        {x}_j \\ {y}_j
    \end{bmatrix}
    -\beta
    \begin{bmatrix}
        {b} \\ \beta{A}^T{b} - {c}
    \end{bmatrix}
\end{aligned}
\label{eq:minimax_sim}
\end{equation}
where the solution of the above fixed point iteration is the root of the following nonlinear equation $f$:
\begin{equation}
    f\left(
    \begin{bmatrix}
        {x} \\ {y}
    \end{bmatrix}
    \right)
    :=
    \begin{bmatrix}
        {0} & -{A} \\
        {A}^T & -\beta{A}^T {A}
    \end{bmatrix}
    \begin{bmatrix}
        {x} \\ {y}
    \end{bmatrix}
    -
    \begin{bmatrix}
        {b} \\ \beta{A}^T{b}-{c}
    \end{bmatrix} .
\label{eq:minimax_f2}
\end{equation}
The coefficients of the initial problem, ${A} \in \mathbb{R}^{100 \times 100}$, ${b} \in \mathbb{R}^{100}$, and ${c} \in \mathbb{R}^{100}$, are generated using random numbers following the distribution $\mathcal{N}(0,1)$. Subsequently, ${A}$ undergoes normalization to ensure its 2-norm equals $1$. The initial guess is also generated using random numbers following the distribution $\mathcal{N}(0,1)$. The cost of this problem is defined as the relative distance to the optimal solution, i.e., $c_j := \|({x}_j,{y}_j) - ({x}^*,{y}^*)\|_2 / \|({x}^*,{y}^*)\|_2$, where $({x}_j,{y}_j)$ is the iteration at step $j$.
}

\begin{figure}[tb]
    \centering
    \begin{subfigure}[t]{0.45\linewidth}
         \centering
         \includegraphics[width=\textwidth]{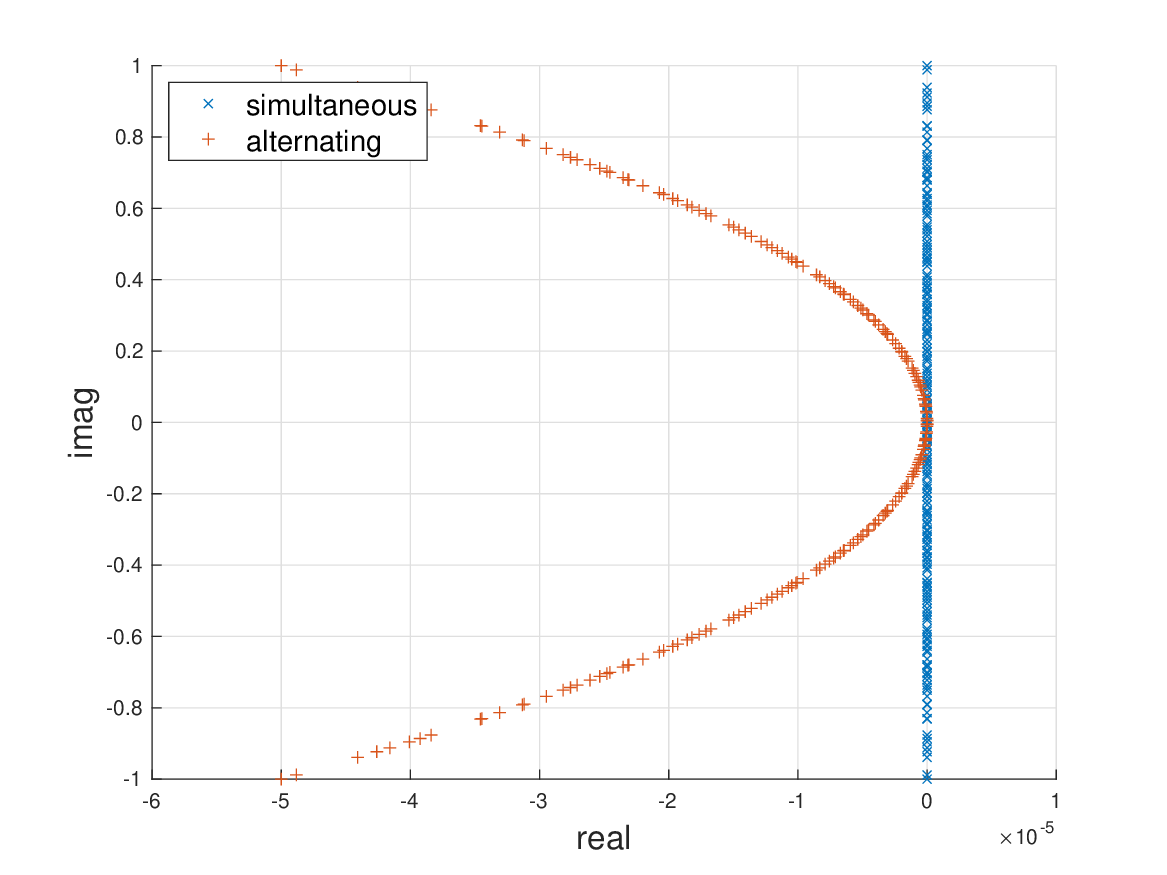}
         \label{fig:lr_1}
    \end{subfigure}
    \hfill
    \begin{subfigure}[t]{0.45\linewidth}
        \includegraphics[width=\linewidth]{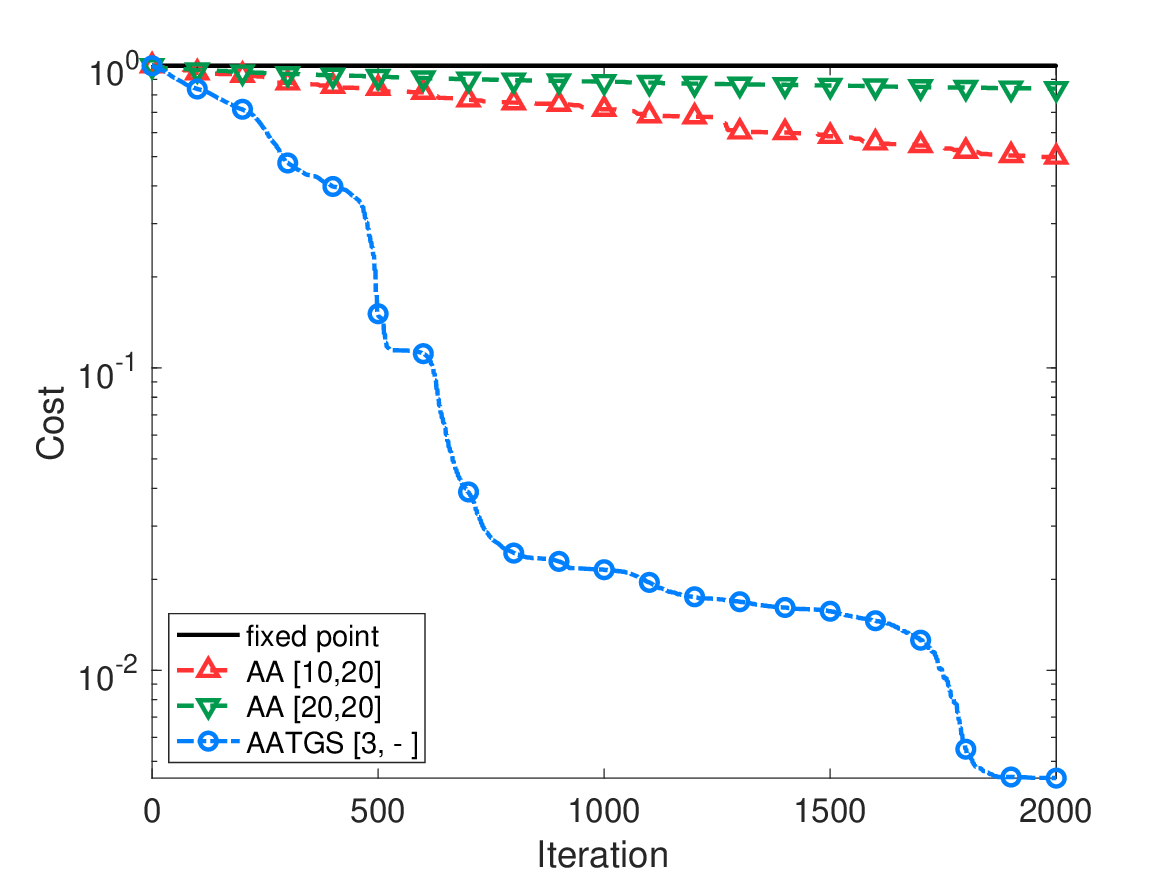}
        \label{fig:minimax1}
    \end{subfigure}
    \caption{Minimax optimization on a bilinear game. (left) Spectrum of the linear systems corresponding to the simultaneous GDA and alternating GDA. $x$-axis is the real part and $y$-axis is the imaginary part. Blue crosses represent the eigenvalues of the simultaneous GDA. Red plus signs represent the eigenvalues of the alternating GDA; (right) The results of various methods in this experiment. $x$-axis is the iteration number and $y$-axis is the relative Euclidean distance to the optimal solution. $[\cdot, \cdot]$ indicates the window size and the restart dimension of each method.}
    \label{fig:reglog_minimax}
\end{figure}

{Note that Equation \eqref{eq:minimax_sim} is referred to as the \textit{alternating} GDA since we update ${x}_{j+1}$ and ${y}_{j+1}$ in an alternating manner. For the \textit{simultaneous} GDA, we update ${x}_{j+1}$ and ${y}_{j+1}$ simultaneously. However, this leads to a skew-symmetric linear system to solve (e.g., consider $\beta=0$ in Equation \eqref{eq:minimax_f2}) which has numerical issues as mentioned at the end of Section \ref{sec:st_aatgs}. Furthermore, the difference in spectra is shown in Fig.~\ref{fig:reglog_minimax} (left), where the eigenvalues of the coefficient matrix in simultaneous GDA are purely imaginary, while those of the alternating GDA have small real parts. Therefore, we consider the alternating GDA in this experiment.}

 In Figure \ref{fig:reglog_minimax} (right), we compare AATGS with standard AA under
  different settings. Since the coefficient matrix defined in function $f$ in Equation \eqref{eq:minimax_f2} is skew-symmetric plus a
  symmetric perturbation, we expect a similar short-term recurrence in AATGS and
  therefore we set the window size to $m=3$. In addition, we employ the auto-restart
  strategy instead of a fixed restart. For the baseline methods, we consider AA with
  window sizes $m=10$ and $m=20$, along with a fixed restart dimension of
  20. 
  Note that we use a smaller restart dimension because both AA options fail to converge if we use a restart dimension of 50.
  Moreover, we set $\beta = 10^{-4}$ to ensure
  that all methods do not diverge in most cases. We observe that after 2000
  iterations, AATGS manages to converge with a relative distance of around
  $0.0044$, while the AAs still have relative distances of 0.69 and 0.84 from
  the optimal solution. This experiment illustrates the appealing behavior of
  AATGS in solving linear problems that are nearly skew-symmetric. 

\section{Conclusion}
\label{sec:conclusion}
This paper introduced what may be termed a `symmetric version' of Anderson Acceleration.
When the fixed point iteration handled by  Anderson Acceleration is a linear iteration,
then AA does not take advantage of symmetry in the case when the iteration matrix is also symmetric.
The Truncated Gram-Schmidt variant of AA (AATGS) introduced in this paper, addresses this issue.
AATGS  is mathematically equivalent to AA when 
the depth of both algorithms is $m = \infty$. However, when the problem is linear and symmetric,
AATGS($\infty$) simplifies in that only a few vectors must be saved instead of all of the previous
directions generated, in order to produce the same iterates as $AA(\infty)$.
This can lead to substantial savings in memory and computational requirements for large problems.
From a practical point of view, the original AATGS algorithm without any modification can suffer from numerical
stability issues. A careful restarting strategy was developed to restart when deemed necessary by a
simple short-term scalar recurrence designed to mimic the behavior of the numerical errors. Equipped with
this artifice, the algorithm showed good robustness, often outperforming the original AA at a lower cost.
This was confirmed by a few numerical experiments, with applications ranging
 from nonlinear partial differential equations to challenging optimization problems.
 The numerical experiments showed that for problems whose Jacobien is nearly symmetric and for
 optimization problems (Hessian is symmetric), AATGS can be vastly superior to AA and this is expected from
 theory.
 
In the future, we plan to explore the applicability and efficacy of AATGS when applied to 
stochastic optimization problems. We will also study the exploitation of information on 
the Jacobian during the iteration in order to improve both robustness and efficiency,  as done in
\cite{nltgcr}.

\section{Acknowledgements}

The authors acknowledge the Minnesota Supercomputing Institute \\ (MSI) at the University of Minnesota for providing resources that contributed to the research results reported within this paper (\url{http://www.msi.umn.edu}).

\bibliographystyle{siam}

\bibliography{aatgs}

\end{document}